\newcommand{\erre}{\mbox{$\mathbb{R}$}}
\newcommand{\enne}{\mbox{$\mathbb{N}$}}
\newcommand{\iE}{\mbox{$\displaystyle{\int_{E}}$}}
\newcommand{\pgn}{{P}_{\gamma_n}}
\newcommand{\g}{\mbox{$i(F)$}}
\providecommand{\keywords}[1]{\textbf{{Key Words:}} #1}
\providecommand{\msc}[1]{\textbf{{M.S.C.:}} #1}
\newtheorem{theorem}{Theorem}[section]
\newtheorem{lemma}[theorem]{Lemma}
\newtheorem{proposition}[theorem]{Proposition}
\newtheorem{corollary}[theorem]{Corollary}
\newtheorem{definition}{Definition}[section]
\newtheorem{remark}[theorem]{Remark}
\newtheorem{example}[theorem]{Example}
\numberwithin{equation}{theorem}
\title{\bf Henstock  multivalued integrability  in Banach lattices with respect to pointwise non atomic measures}
\author{Antonio Boccuto,
       Domenico Candeloro,
        Anna Rita Sambucini}
\affil{Department of Mathematics and Computer Sciences,\\ University of Perugia,\\
 1, Via Vanvitelli -- 06123, Perugia (Italy)\\ 
\mbox{~} \\
\small emails: antonio.boccuto@unipg.it,domenico.candeloro@unipg.it, anna.sambucini@unipg.it}
\begin{document}
\date{}
\maketitle

\abstract{Henstock-type integrals are considered, for multifunctions taking values 
in the family of weakly compact and convex subsets of  a Banach lattice $X$.
The main tool to handle the multivalued case is a R{\aa}dstr\"{o}m-type embedding theorem established 
by C. C. A. La\-buscha\-gne,  A. L. Pinchuck, C. J. van Alten in 2007.
In this way the norm and order integrals reduce
 to that of a single-valued function taking values in an $M$-space, and new 
proofs are  deduced for some decomposition results recently stated in 
two recent papers by Di Piazza and Musia\l \, based on the existence of integrable selections.}\\

\noindent
\keywords{ Banach lattices, 
Henstock integral,  
Mc Shane integral, 
multivalued integral, 
pointwise non atomic measures.}\\

\noindent
\msc{28B05, 28B20, 46B42, 46G10, 18B15}
\section{Introduction}

After the pioneering works of Aumann  and Debreu, the multivalued case has been intensively studied, several notions of integral for
multivalued functions  in Banach and other vector spaces have been developed.
These notions have shown to be useful when modelling some theories in different fields as  optimal
control and mathematical economics, see for example \cite{dps1,cc}. 
The choice to deal with these types of integration is motivated by the fact that  Bochner integrability of  selections 
is a strong condition; moreover selection theorems for the Auman--Bochner integral are stated
in the separable context. 
In order to overcome this problem  contributions  have been given also in \cite{cascales2007a, cascales2010, DPVM,cg2014}.

The starting point of this research are  the papers \cite{BS2004,BS2011,bms, cs2014} in which the comparison between norm and order-(multi) valued integration is examined in Banach lattices and vector lattices. 
 Another fundamental tool of this paper is the embedding theorem given  in \cite{L1}, in which a vector lattice version of the R{\aa}dstr\"{o}m  embedding theorem is stated.
As far as we know a first result which uses an embedding theorem was given in \cite{ab} and then by Debreu, Castaing and his school (see for example \cite{CV}) and later on by many other authors: see for example, 
\cite{bas2004,ms2001,ms2002,BS2004,CASCALES2,cascales2007a,Lm,dpm3,dpm4,dpmb1,DPM,DPVM}.
This paper aims 
to provide a comparison between different types of multivalued integration, in  Banach lattices.\\
Throughout this paper $(T,d)$ is a compact metric Hausdorff topological space, $\Sigma$ its Borel
$\sigma$-algebra and $\mu : \Sigma \to \mathbb{R}_0^+$ a regular, pointwise non atomic measure,
 $X$ a  Banach (lattice) space and $cwk(X)$ the family of all convex weakly compact non-empty subsets of $X$. \\
 In Section \ref{multi}  the Henstock multivalued integral is introduced (H-integral for multifunctions).
A very important tool for the study of set-valued integration is the Kuratowski
and Ryll-Nardzewski's theorem which guarantees the existence of measurable selectors, though it has the
handicap of the requirement of separability for the range space. Recently this result was extended to the non separable case in \cite{cascales2007a,cascales2010,dpmb1} for other kinds of multivalued integrals or for the $ck(X)$-valued case,
while here  a selection theorem is given for $cwk(X)$-valued, H-integrable multifunctions.

Moreover, thanks to the structure of near vector space of $cwk(X)$, it is proved that the  H-integrability of $F$ is  equivalent to the H-integrability (and also order-type integrability) of its {\em embedded} function $i(F)$,  and that the multivalued integral here obtained coincides with the one given in \cite{BS2004}.
This fact also implies that the H-integral of $F$ and the norm-integral  $\Phi(F, \cdot)$ given in \cite{BS2004} are countably additive multimeasures.
Of course, if the target space is a general Banach lattice (not necessarily an $M$-space), considering order convergence gives rise to a new concept of integral. 
In Subsection \ref{o-multi}, indeed,  the order structure is considered,  since vector lattices play an important role  (see for example \cite{bms,bvl,bvg,bbs,dallas,csmed,mn,L3,cv2014,av2010}) also in applications, and an H-(multivalued) integral with respect to the order is studied.\\
\section{Preliminaries on Henstock and McShane  integrals}\label{HMS-univoco}
Given a compact metric Hausdorff topological space $(T,d)$ and its Borel
$\sigma$-algebra $\Sigma$, let $\mu: \Sigma \rightarrow \mathbb{R}_0^+$ be a 
 pointwise non atomic (namely $\mu (\{t\}) = 0$ for every $t \in T$)
 $\sigma$-additive (bounded) regular measure, 
so that $(T,d,\Sigma,\mu)$ is a Radon measure space. Let $X$ be a Banach space.
The following concept of Henstock integrability was presented in \cite{f1994a} for bounded measures in the Banach space context. See also  \cite{bms} for the following definitions and investigations.

A \textit{gauge} is any map $\gamma: T \rightarrow \mathbb{R}^+$.
A \textit{decomposition}  $\Pi$  of $T$ is a finite family $\Pi = \{ (E_i,t_i): i=1, \ldots,k \} $ of pairs such that
$t_i \in T, \, E_i \in \Sigma$ and $\mu(E_i\cap E_j) = 0$ for $i \neq j$.
 The points $t_i$, $i=1, \ldots, k$, are called \textit{tags}.
If moreover $\bigcup_{i=1}^k \, E_i = T$, $\Pi$ is called a \textit{partition}.\\
A \textit{Perron partition} is a partition such that $t_i \in E_i$ for every $i$.\\
Given a gauge $\gamma$, any partition $\Pi$ is said to be 
 \textit{$\gamma$-fine} $(\Pi \prec \gamma)$ if
$d(w,t_i) < \gamma (t_i)$ for every $ w \in E_i$ and $i = 1, \ldots, k$.
Equivalently a gauge $\gamma$ can be also defined as a mapping associating with each point $t\in T$ an open set which contains the point $t$.
\begin{definition}\rm  \label{fnorm}
\rm A function $f:T\rightarrow X$ is \textit{{\rm Ms}-integrable} (resp. \textit{{\rm H}-integrable})
if there exists $I \in X$ such that, for every $\varepsilon > 0$
 there is a gauge $\gamma : T \rightarrow \mathbb{R}^+$ such that for every $\gamma$-fine partition (resp. Perron partition)
of $T$, 
$\Pi=\{(E_i, t_i), i=1, \ldots, q \}$, one has:
\begin{eqnarray*}
\left\|\sigma(f, \Pi)- I \right\| \leq \varepsilon,
\end{eqnarray*}
where the symbol $\sigma( f, \Pi)$ means
 $\sum_{i=1}^q  f(t_i) \mu(E_i)$.
In this case, the following notation will be used: $I = {\rm (Ms)}\int_T f d\mu$ 
(resp. $I = {\rm (H)}\int_T f d\mu$).
\end{definition}

\begin{remark}\label{ereditarieta} \rm
It is not difficult to deduce, in case $f$ is H-integrable in the set $T$, that also the restrictions $f\ 1_E$ are, for every measurable set $E$, thanks to the Cousin Lemma (see \cite[ Proposition 1.7]{riecan}).
This is not true in the classical theory, where  $T=[0,1]$ and the partitions allowed are only those consisting of sub-intervals.
 As it is well-known, in such classical case H-integrability is different from McShane integrability and also from Pettis integrability.  Nevertheless, McShane integrability of a mapping $f:[0,1]\to X$ (here $X$ is any Banach space) is equivalent to Henstock and Pettis simultaneous integrabilities to hold (see \cite[Theorem 8]{f1994a}).
\end{remark} 

Since the measure $\mu$ is assumed to be pointwise  non atomic,  all 
the concepts in the Henstock sense turn out to be equivalent to the same concepts in the McShane sense (i.e. without requiring that the {\em tags} are contained in the corresponding sets of the involved partitions)
as shown in \cite[Proposition 2.3]{cs2015}, in fact  it will be sufficient to observe that, for every  gauge $\gamma$ and any $\gamma$-fine partition  $$\Pi^0:=\{(B_i,t_i): i=1,...,k\},$$
 there exists a Henstock-type $\gamma$-fine partition $\Pi^{\prime}$ satisfying $\sigma(f,\Pi^0) = \sigma(f,\Pi^{\prime})$.
 Without loss of generality,  all the tags $t_i$ can be supposed to be distinct.
Now, set $A:=\{t_i,i=1,...,k\}$ and define, for each $j$:
$$B^{\prime}_j:=(B_j\setminus A)\cup\{t_j\}.$$
 Then the pairs $(B_j,t_j)$ form a $\gamma$-fine Henstock-type partition $\Pi^{\prime}$ and $\mu(B^{\prime}_j)=\mu(B_j)$ for all $j$, so
$\sigma(f,\Pi^0) = \sigma(f,\Pi^{\prime})$.
So the Perron condition will no longer be necessary.\\

For the sake of clearness, it is useful to remark that actually 
McShane integrability in the classical case for a mapping $f:[0,1]\to X$ is equivalent to the present
 definition of Ms-integrability (see \cite{f1995}), while Henstock integrability (in the classic case) is strictly weaker than McShane. 
However, since in  this context (for pointwise non atomic measures) the two notions coincide, 
they will be no more distinguished, and will be simply referred as the {\em Henstock} integral.\\

From now on  suppose that   $X$ is a Banach lattice,
$X^+$ is its positive cone and $X^{++}$ is the subset of strictly positive elements of $X$.
A sequence $(p_n)_n$ in $X$ is called \textit{$(o)$-sequence} iff it is decreasing and $\wedge_n p_n=0$.\\
The symbols $|~ |,\, \| ~\|$ refer to modulus and norm of $X$, respectively; for the relation between them see for example
\cite{bvg,bvl,fvol3,bms}.\\
An element $e$ of $X^+$ is an order unit in $X$ if $X$ is the solid linear subspace of itself generated by $e$;
that is, if for every $u \in X$ there is an $n \in \mathbb{N}$ such that $|u| \leq  ne$.\\ 
An $M$-space is a Banach lattice $M$ in which the norm is an order-unit norm, namely 
there is an order unit $e$ in $M$ and an equivalent Riesz norm $\| \cdot \|_e $ defined on $M$ by the formula
$\|u\|_e := \min\{ \alpha: |u| < \alpha e \}$
for every $u \in M$. In this case one has also
$\|u+v\|=\|u\|\vee \|v\|$ for positive $u,v\in M$.\\
An $L$-space is a Banach lattice $L$ such that $\|u + v\| = \|u\| + \|v\|$ whenever $ u, v\in  L^+$.
 For further details about $L$- and $M$-spaces see also \cite{fvol3}.  \\

In the setting of Banach lattices the notion of {\em order-type} integral can be given as follows.
\begin{definition}\rm  \label{forder}
\rm A function $f:T\rightarrow X$ is \textit{order-integrable}
in the Henstock sense (\textit{{\rm (oH)}-integrability}  for short) if there exists $J \in X$ together with an $(o)$-sequence $(b_n)_n$ in $X$ 
and a corresponding sequence $(\gamma_n)_n$ of gauges, such that for every $n$ and every $\gamma_n$-fine
 (Perron) partition of $T$,
$\Pi=\{(E_i, t_i), i=1, \ldots, q \}$, 
it is
$\left|\sigma(f, \Pi)- J \right| \leq b_n$;
then the integral $J$ will be denoted with (oH)$\int f$.
\end{definition}
Also in this case there is no difference in taking $\gamma_n$-fine arbitrary
 partitions rather than Perron-type partitions.
So from now on, unless otherwise specified,   only arbitrary
 $\gamma_n$-fine partitions will be considered.
\section{Gauge multivalued integrals}\label{multi}
The previous notions of integral can be extended to the case of multivalued mappings. 
For the norm integral, the concept given in \cite{BS2011} will be adopted here, comparing it with other definitions: thanks to a well-known R{\aa}dstr\"{o}m-type embedding theorem  given in \cite{L1}, the multivalued case can be reduced to the single-valued one, which can help to study the integral.\\
When the space $X$ is a Banach lattice, also the notion of {\em order}-type integral will be introduced, for multivalued mappings, essentially following the lines of the paper \cite{bms}. Also for this notion, some properties and comparisons will be stated.\\

Let $P_0(X)$  be  the set of all nonempty subsets of the Banach
 space $X$, and  
 $bf(X)$  be  the set of all nonempty, bounded and closed subsets of $X$; moreover, let 
$cwk(X)$, $ck(X)$, denote respectively the subfamilies of 
$bf(X)$ of all   
weakly compact, convex; 
 compact, convex subsets of $X$.\\
 For all $A,B \in P_0(X)$ and $\lambda \in \mathbb{R}$  the Minkowski addition and scalar multiplication are defined as
\begin{eqnarray*}\label{operations}
A + B = \{ a+b: a \in A, b \in B \}, \,\,\, \mbox{and} \,\,\,\, 
\lambda A =  \{ \lambda a : a \in A. \}
\end{eqnarray*}

As in \cite{CV,L1} the operation $\oplus$  on $bf(X)$ is defined by setting  $A \oplus B := \mbox{cl} (A +B)$. 
If $A_i \in bf(X), \,\, i = 1, \ldots, n$, then
\begin{eqnarray*}\label{sommapunto}
\sum_{i=1}^n A_i:= \mbox{cl}(A_1  +  \cdots  + A_n).
\end{eqnarray*}
Observe that if $A,B \in bf(X)$ then $A \oplus B \in bf(X)$. 
In case $A$ and $B$ are in $cwk(X)$ (or in $ck(X)$), then the Minkowski addition 
is already closed, so in these cases the closure in  the previous operation
is not needed. For all unexplained terminology on multifunctions we refer to \cite{CV}.
Since the set inclusion is a natural ordering on $P_0 (X)$, which is compatible with 
the Minkowski addition and scalar multiplication,
in \cite{L1} a R{\aa}dstr\"{o}m embedding theorem
 is extended to the space $cwk(X)$  as follows.

\begin{theorem}{\rm  (\cite[Theorem 5.6]{L1})} \label{5.6}
There exists a compact Hausdorff space $\Omega$ and a positively linear map 
$i:cwk(X) \to C(\Omega)$ such that
\begin{description}
\item[\ref{5.6}.1)] $d_H(A,C)=\|i(A)-i(C)\|_{\infty},\ \ A,C\in cwk(X)$ 
{\rm (of course, here $d_H$ denotes the Hausdorff distance (see \cite{L1,CV}))};
\item[\ref{5.6}.2)] $i(cwk(X))=cl(i(cwk(X)))\ \ \ \ {\rm (norm\ closure). \ }$
\item[\ref{5.6}.3)] $i(\overline{co}(A\cup C)) =\max\{i(A),i(C)\}$ for all $A,C$ in $cwk(X)$.
\end{description}
\end{theorem}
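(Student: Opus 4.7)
The plan is to construct the embedding in two stages: first apply a R{\aa}dstr\"om-type construction to embed $cwk(X)$ into a normed Riesz space $V$, then invoke Kakutani's representation theorem to identify $V$ with $C(\Omega)$ for a compact Hausdorff space $\Omega$.

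For the first stage, note that $cwk(X)$ is closed under Minkowski addition (the sum of two convex weakly compact sets is again convex and weakly compact, hence norm closed, so $+$ and $\oplus$ coincide on $cwk(X)$) and under positive scalar multiplication; moreover, the cancellation law $A + C = B + C \Rightarrow A = B$ holds for $A,B,C \in cwk(X)$ by the classical R{\aa}dstr{\"o}m argument exploiting convexity, boundedness and closedness. One may therefore form the Grothendieck group $V$ of formal differences and push the Hausdorff distance $d_H$ to a norm on $V$; this produces a positively homogeneous, additive, isometric embedding $j : cwk(X) \to V$. To obtain a lattice structure, observe that $A \vee B := \overline{co}(A \cup B)$ lies in $cwk(X)$ and distributes over Minkowski addition: $(A \vee B) + C = (A + C) \vee (B + C)$ for every $C \in cwk(X)$. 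This identity propagates $\vee$ to a lattice operation on $V$ compatible with the linear structure, turning $V$ into a normed Riesz space.

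For the second stage, one verifies that $V$ (or its norm completion) is an AM-space with unit: the identity $\|u \vee v\| = \|u\| \vee \|v\|$ on positive elements reflects a corresponding Hausdorff-distance identity for $\overline{co}$-unions, and an order unit can be taken as the image under $j$ of a sufficiently large closed ball of $X$. By Kakutani's representation theorem, this AM-space is isometrically Riesz-isomorphic to $C(\Omega)$ for some compact Hausdorff $\Omega$; composing with $j$ yields the desired positively linear map $i$. The first assertion then follows because $i$ is a composition of two isometries and the codomain norm is $\|\cdot\|_\infty$. The third assertion follows because the Kakutani isomorphism is a lattice isomorphism and therefore carries the abstract $\vee$ on $V$ (which, pulled back via $j$, agrees with $\overline{co}(A \cup C)$) to the pointwise maximum in $C(\Omega)$. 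The second assertion, namely norm closedness of $i(cwk(X))$ in $C(\Omega)$, reduces via the isometry to completeness of $(cwk(X), d_H)$, a classical fact valid in any Banach space.

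The main obstacle is the verification that $\vee$, defined on $cwk(X)$ as $\overline{co}$ of the union, extends consistently to a well-defined lattice operation on $V$ and that $V$ actually carries the structure of an AM-space. This rests on the distributive identity $(A \vee B) + C = (A+C) \vee (B+C)$ for weakly compact convex sets and on a careful choice of order unit; once these two ingredients are established, the remainder of the argument is a routine application of the R{\aa}dstr{\"o}m and Kakutani machinery.
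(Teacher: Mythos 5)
The paper offers no proof of this statement: it is imported verbatim from \cite[Theorem 5.6]{L1}, so the only possible comparison is with the strategy of that source, which your reconstruction essentially follows (a R{\aa}dstr\"om-type completion of $(cwk(X),+,d_H)$ followed by a Kakutani-type representation). Most individual steps are sound: closedness of $cwk(X)$ under Minkowski addition, the cancellation law, the distributive identity $(A\vee B)+C=(A+C)\vee(B+C)$, and the reduction of \textbf{\ref{5.6}.2)} to completeness of $(cwk(X),d_H)$ --- though for the latter you should also record that a $d_H$-limit of weakly compact convex sets is again weakly compact (Grothendieck's lemma), which is exactly where the restriction to $cwk(X)$ rather than all closed bounded convex sets matters.

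There is, however, one genuine gap: the proposed order unit, ``the image under $j$ of a sufficiently large closed ball of $X$'', does not exist in general, since closed balls of $X$ are weakly compact only when $X$ is reflexive and hence need not belong to $cwk(X)$. Worse, the closed Riesz subspace generated by $j(cwk(X))$ may contain no order unit inducing an equivalent norm at all: for $X=c_0$ every weakly compact convex set $A$ satisfies $s(e_n^*,A)\to 0$ along the coordinate functionals, so every lattice-linear combination of support functions, and every uniform limit of such, vanishes asymptotically on a sequence of norm-one functionals, whereas a norm-compatible unit would have to stay bounded away from $0$ on the whole dual unit sphere. Since the version of Kakutani's theorem you invoke requires an AM-space \emph{with} unit, this step breaks. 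Two repairs are available: use the unit-free form of the Kakutani--Bohnenblust--Krein theorem (every AM-space is isometrically Riesz isomorphic to a closed Riesz subspace of some $C(K)$, see \cite{mn}), or bypass the abstract completion entirely via H\"ormander's embedding $A\mapsto s(\cdot,A)|_{B_{X^*}}$ into $l^\infty(B_{X^*})\cong C(\beta B_{X^*})$, which yields \textbf{\ref{5.6}.1)} and \textbf{\ref{5.6}.3)} directly from $d_H(A,C)=\sup_{x^*\in B_{X^*}}|s(x^*,A)-s(x^*,C)|$ and $s(\cdot,\overline{co}(A\cup C))=\max\{s(\cdot,A),s(\cdot,C)\}$, and \textbf{\ref{5.6}.2)} from completeness of $(cwk(X),d_H)$ as you indicate; this second route is the one implicitly relied upon elsewhere in the paper (see the appeal to $l^{\infty}(B_{X^*})$ in the proof of Theorem \ref{nuovoconfronto}).
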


\subsection{Norm multivalued integrals}\label{n-multi}
In this section a short survey is presented of the different notions of the multivalued integral in the norm sense, in order to compare them and possibly 
improve some results.
\begin{definition}\rm
For a  multifunction \mbox{$F: T \rightarrow P_0 (X)$\,} let $S^1_{F,H}$ 
be the set of all {\rm H}-integrable selections of $F$ in the sense of Definition \ref{fnorm}, namely:
\( S^1_{F, H} =\{  f: f(t) \in F(t)
\hskip.1cm \mu-\mbox{a.e. and \,}  f \,\,  \mbox{is H-integrable.} \}\)
\end{definition}

\begin{definition} \label{Hnorma}\rm
If $S^1_{F, H}$ is non-empty, then for every $E \in \Sigma$   the {\em Aumann-Henstock integral~} 
((AH)-{\em integral}) of $F$  is defined as the set
\[ ({\rm AH})\iE F d \mu = \left\{  \iE f d\mu, f \in S^1_{F, H} \right\}.\]
\end{definition}

In order to prove existence of the previous selections, the following concept of integrability is also introduced, for a multifunction \mbox{$F:T\to cwk(X)$.}

\begin{definition}\label{normintegral}\rm
Given a multifunction $F:T\to cwk(X)$, $F$ is {\em {\rm H}-integrable} if there exists an element $J\in cwk(X)$, such that for every $\varepsilon>0$ there exists a gauge $\gamma$ such that, for every $\gamma$-fine partition $\Pi$, the following holds:
$d_H(\sum_{\Pi} F,J)\leq \varepsilon.$
In this case one writes
$J:={\rm (H)}\int_T F d\mu.$
\end{definition}

Also in this case, existence of the integral in $T$ implies existence in all measurable subsets $E$ of $T$ (which will be denoted by $J_E(F)$)
analogously to Remark \ref{ereditarieta}.
Indeed, as later shown, the last definition of integrability reduces to H-integrability for a corresponding single-valued function $F$ taking values in the space $C(\Omega)$, where $\Omega$ is a suitable compact space: see Theorems \ref{5.6} and \ref{nuovoconfronto} below.
\\
For the definitions of Mc Shane and Pettis integrals for multifunctions see for example \cite[Definition 2]{BS2004},\cite[Definition 3.1]{BS2011}, omitting the "limsup" in the first definition since here $T$ is compact and $\mu$ is bounded.
One can observe also that, in case a multifunction $F:T\to cwk(X)$ is H-integrable, then there exist H-integrable selections.
 This result has been proved in \cite[Theorem 2.5]{cascales2007a} for Pettis integrable selections and  in 
 \cite[Theorem 3.1]{dpm4}, but for Henstock integrable multivalued mappings defined in the real interval $[0,1]$.

\begin{theorem}\label{esistenza}
If $F:T\to cwk(X)$ is {\rm H}-integrable, then 
$S^1_{F, H} \neq \emptyset$. 
\end{theorem}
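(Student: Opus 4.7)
My strategy is to reduce the multivalued problem to the single-valued setting via the embedding of Theorem~\ref{5.6}, use that reduction to produce a measurable selection, and finally verify that the selection inherits Henstock integrability.

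First, since $i:cwk(X)\to C(\Omega)$ is positively linear and satisfies $d_H(A,C)=\|i(A)-i(C)\|_\infty$, the embedding commutes with Henstock sums: for any partition $\Pi=\{(E_j,t_j)\}$,
\[
\sigma(i\circ F,\Pi)=\sum_{j}\mu(E_j)\,i(F(t_j))=i\Bigl(\sum_\Pi F\Bigr),
\]
so that
\[
\bigl\|\sigma(i\circ F,\Pi)-i(J)\bigr\|_\infty = d_H\Bigl(\sum_\Pi F,\,J\Bigr).
\]
Therefore Definition~\ref{normintegral} transfers verbatim into ordinary single-valued H-integrability of $i\circ F:T\to C(\Omega)$, with integral $i(J)$. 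This reduction is the backbone of the whole argument.

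Second, I would exploit the H-integrability of $i\circ F$ to extract measurability information about $F$. Composing with the continuous evaluations $\delta_\omega:C(\Omega)\to\mathbb{R}$ shows that for each $\omega\in\Omega$ the real function $t\mapsto i(F(t))(\omega)$ is Henstock integrable, hence measurable. Because $i$ is modelled on the support-function embedding, this forces $F$ to be scalarly measurable on a $w^*$-dense subset of $X^*$; combining this with the weak compactness of the values and a Kuratowski–Ryll-Nardzewski type argument in the spirit of \cite{cascales2007a,dpm4} should yield a pointwise selection $f:T\to X$ with $f(t)\in F(t)$ for every $t$ and with enough measurability for the Henstock sums of $f$ to be well-defined.

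Third comes the most delicate step, namely the verification that $f$ itself is H-integrable. One half is immediate: for a $\gamma_\varepsilon$-fine partition $\Pi$, the sum $\sigma(f,\Pi)=\sum_j\mu(E_j)f(t_j)$ belongs to the Minkowski sum $\sum_\Pi F$, so it is within Hausdorff distance $\varepsilon$ of $J$. The subtle half — and what I expect to be the main obstacle — is that proximity to the whole set $J$ does not automatically yield proximity of the sums to a single point of $X$, so the Cauchy criterion is not at hand. To bridge this gap I plan to lift $f$ to $C(\Omega)$ by $\tilde f(t)(\omega):=\langle\omega,f(t)\rangle$; the inequalities $-i(F(t))(-\cdot)\leq\tilde f(t)\leq i(F(t))$ sandwich $\tilde f$ between two H-integrable functions in the lattice $C(\Omega)$, and an order/sandwich argument there should force Cauchyness of $\sigma(\tilde f,\Pi)$. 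Reading the conclusion back through $i$ (using 5.6.2) and the completeness of $X$ finally produces the integral of $f$ and completes the proof.
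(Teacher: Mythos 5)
Your first step (transferring H-integrability of $F$ to H-integrability of $i\circ F$ with integral $i(J)$) is correct, but it is essentially the content of Theorem~\ref{nuovoconfronto} and it does not carry the weight of this proof. The two places where your plan has genuine gaps are exactly the two steps you leave to ``a Kuratowski--Ryll-Nardzewski type argument'' and to a ``sandwich argument''. For the selection step: the Kuratowski--Ryll-Nardzewski theorem requires separability of the range space, which is precisely the hypothesis this paper (and \cite{cascales2007a}) is designed to avoid; scalar measurability of $F$ on a $w^*$-dense set of functionals does not, by itself, produce a pointwise selection in a non-separable $X$. The paper's actual construction is quite different: it takes $K:={\rm (H)}\int_T F\,d\mu\in cwk(X)$, chooses a \emph{strongly exposed} point $x_0\in K$ with exposing functional $x_0^*$, forms the face multifunction $G(t):=\{x\in F(t):x_0^*(x)=s(x_0^*,F(t))\}$, and extracts a Pettis-integrable selector $g$ of $G$ (hence of $F$) by following the proof of \cite[Theorem 2.5]{cascales2007a}. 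Nothing in your outline replaces this mechanism.

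For the verification step: you correctly identify that $\sigma(f,\Pi)\in\sum_\Pi F$ only gives proximity to the \emph{set} $J$, not a Cauchy condition, but your proposed fix fails. Being order-sandwiched between two H-integrable functions in $C(\Omega)$ does not imply H-integrability: the paper's own Example~\ref{nonH} (the selection $1_A$ of $F(t)=[0,1_A(t)]$ for a non-measurable $A$, squeezed between the integrable functions $0$ and $1$) is a counterexample to exactly this kind of argument. What actually closes the gap in the paper is that the selector $g$ obtained from the face multifunction satisfies $x^*(g)=s(x^*,G)$ $\mu$-a.e.\ for every $x^*$, so each scalar function $x^*(g)$ is Lebesgue-integrable, hence McShane-integrable, which in this pointwise-non-atomic setting coincides with Henstock integrability; one then runs the argument of \cite[Theorem 3.1]{dpm4} to upgrade the Pettis-plus-scalar information to H-integrability of $g$. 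So the proposal is not a correct alternative proof: both of its load-bearing steps need to be replaced by the strongly-exposed-point/Pettis machinery.
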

\begin{proof}
 We first observe that, thanks to  H\"{o}rmander equality (see e.g. \cite[formula (3)]{dpm4}), the function $F$ is scalarly H-integrable, i.e. the {\em support} mappings $t\mapsto s(x^*,F(t))$ are $H$-integrable, for all $x^*\in X^*$.
Let us set $K:={\rm (H)}\int_T F d\mu$, and choose any strongly exposed point $x_0\in K$: such a point exists, since $K$  is in $cwk(X)$. Then there exists a functional $x_0^*\in X^*$ such  that $x^*_0(x)<x^*_0(x_0)$ for every $x\in K$, $x\neq x_0$.
Let 
$$G(t):=\{x\in F(t):x_0^*(x)=s(x^*_0,F(t))\},$$ for every $t\in T$.
Proceeding like in the proof of  \cite[Theorem 2.5]{cascales2007a},  
one has that $G$ is Pettis integrable  in the sense of \cite{cascales2007a}
(i.e. the {\em support mappings} $s(x^*,G)$ are integrable for every $x^*\in X^*$ and for every measurable subset $E\subset T$ there exists an element $P_E(G)\in cwk(X)$ (denoted also by $(P)\int_E G d\mu$) such that 
$$s(x^*,P_E(F))=\int_E s(x^*,G)d\mu),$$
 and has  a Pettis integrable selector $g$ (which is also therefore a selector of $F$),
for which $x^*_0(x_0)=\int_T x^*_0 g(t) d\mu$.  
Moreover
 $(P)\int_T G d\mu=\{x_0\}$, and
$x^*(g)=s(x^*,G)$ $\mu$-a.e. for all $x^*$, and every selector $g$ of $G$. 
So, for each $x^*$, the mapping $x^*(g)$ is Lebesgue-integrable, and therefore McShane-integrable (see \cite[Theorem 1O]{fvol4}), which in our setting means Henstock integrability. Then, proceeding as in the proof of  \cite[Theorem 3.1]{dpm4}, 
the conclusion follows, i.e. $g$ is H-integrable.
\end{proof}
The multivalued {\em norm} integral, introduced in \cite[Definition 3.13]{bms} will be now recalled. This definition is inspired at a similar notion introduced in \cite[Definition 3]{JK} in order to study differential relations.
In the subsection \ref{o-multi} a corresponding notion for the {\em order-type} multivalued integral will be introduced in the Banach lattice context and compared with this.
\begin{definition}\rm 
\cite[Definition 3.13]{bms}
\label{normultintegral}
Let $F:T \to P_0 (X)$ be a multifunction, and $E \in \Sigma$. 
We call \it $(\| \cdot \|)$-integral \rm of $F$ on $E$ the set
\begin{eqnarray*}
\Phi (F,E) &=& \left\{ \right. z \in X: \text{for every }  \,\, \varepsilon \in \mathbb{R}^+ 
 \text { there is a gauge } \,  \gamma : T \rightarrow \mathbb{R}^+ : 
\\ &&
\displaystyle{\inf_{c \in \sum_{\Pi_{\gamma}} \, F} \|z-c\|} \leq\varepsilon \,\,  \text{for every}\,\, \gamma\text{-fine partition }  \\ &&
 \, \left. 
\Pi_{\gamma}:= \{ (E_i,t_i): i=1, \ldots,k \}  
\ \text{of } E. \right\} 
\end{eqnarray*}

Alternatively, one can write (\cite[Proposition 3.7]{bms}) 
\begin{eqnarray}\label{capcup}
\Phi(F,E)=\bigcap_n\bigcup_{\gamma}\bigcap_{P_{\gamma,E}}
\left[\Sigma_{\Pi}\,F\oplus\frac{B_X}{n}\right],
\end{eqnarray}
where $P_{\gamma,E}$ is the family of all (Perron)
 $\gamma$-fine partitions of E. 
\end{definition}

\begin{remark}\label{unainclusione} \rm 
We collect here some relations between the previous definitions of integral, and also with the Aumann integral.
In case $F$ is $cwk(X)$-valued and H-integrable on $E$  there exists an element $J_E(F)\in cwk(X)$ such that, for every $n\in \enne$ it is possible to find a gauge $\gamma_n$ such that
\begin{eqnarray}\label{prima}
\sum_{\Pi} \, F \subset J_E(F)+n^{-1} B_X  \mbox{\,\, and\,\, }
J_E(F)\subset \sum_{\Pi} \, F + n^{-1}B_X
\end{eqnarray}
hold true, for every $\gamma_n$-fine partition $\Pi$, where $B_X$ is the unit ball in $X$.
So, in this case we have clearly
$J_E(F)\subset \Phi(F,E).$
Later also the converse inclusion will be proven (see Proposition \ref{immersa con Phi}).\\
Observe that, for every $E \in \Sigma$, the following inclusion holds:
$$({\rm AH})\iE F d \mu\subset \Phi(F,E)$$
since, if $f \in S^1_{F,H}$, then its H-integral belongs by definition to  the right member of (\ref{capcup}). However, in case $F$ is H-integrable, then 
$$({\rm AH})\iE F d \mu\subset J_E(F):$$
indeed, if $f\in S^1_{F,H}$, for each $n$ there exists a gauge $\gamma'_n$ such that 
$${\rm (H)}-\int_E f d\mu\in \Sigma_{\Pi}\, F\oplus n^{-1}B_X$$
for all $\gamma'_n$-fine partitions $\Pi$; then, if $\Pi$ is $(\gamma_n\wedge \gamma'_n)$-fine, 
$${\rm (H)}-\int_E f d\mu\in J_E(F)\oplus 2n^{-1}B_X.$$
(here $\gamma_n$ is the gauge corresponding to $n$ in the definition of $J_E(F)$). By arbitrariness of $n$ and closedness of $J_E(F)$, we get that 
$${\rm (H)}-\int_E f d\mu\in J_E(F);$$
 and, by arbitrariness of $f$, the announced inclusion follows.\\
Moreover since $f$ is H-integrable for every $E \in \Sigma$  
then Definition \ref{normintegral}
is equivalent to the ($\star$)-integral given in \cite[Definition 2]{BS2004}.\\
Assuming that $X$ is a separable Banach space and that there
exists a countable family $(x^{\prime}_n)_n$ in $X^{\prime}$
which separates points of $X$ then, thanks to  \cite[Theorem 1]{BS2004}
the following equalities follow,  for any measurable and integrably bounded multifunction $F$ with values in $cwk(X)$:
$$J_E(F)=({\rm AH})\iE F d \mu=\Phi(F,E).$$
\end{remark}
 
Using Theorem \ref{5.6},  according to \cite[Definition 2.1]{CASCALES2} and \cite[Definition 3]{BS2004}, the following main 
result can be stated, which is analogous to \cite[Proposition 4.4]{cascales2007a} (given for the Pettis integrability),
together with a  decomposition theorem that can be compared with previous ones obtained in \cite[Theorem 1]{dpmb1} and in  \cite[Corollary 3.2]{dpm4}. A definition is needed, however.

\begin{definition}\label{weak bochner}\rm
If $F:T\to cwk(X)$ is a multivalued mapping and $X$ any Banach space, we say that $F$ is {\em non-negative} if $i(F):T\to C(\Omega)$ is, where $i$ is the embedding found in Theorem \ref{5.6}.
\end{definition}
\begin{theorem}\label{nuovoconfronto}
Let $F:T\rightarrow cwk(X)$ be  a multifunction. The following are equivalent:
\begin{description}
\item[\ref{nuovoconfronto}.1)]
  $F$ is {\rm H}-integrable  {\rm(}in the sense of  Definition \ref{normintegral}{\rm)};
\item[\ref{nuovoconfronto}.2)]
 the {\em embedded} function $i(F):T\rightarrow C(\Omega)$ is {\rm H}-integrable, and in that case {\rm (H)}-$\int i(F)d\mu=i(J_T(F))$; 
\item[\ref{nuovoconfronto}.3)] for every $E \in \Sigma$ and  $\varepsilon > 0$ there exists a gauge $\gamma$ such that
$ \| \sigma( i(F),\Pi_1) - \sigma( i(F),\Pi_2) \|_{\infty} \leq \varepsilon $
for every   $\gamma$-fine partitions $\Pi_1, \Pi_2$ of $E$;
\item[\bf \ref{nuovoconfronto}.4)]\  $F$ is the sum of a non-negative {\rm H}-integrable multifunction $G$ with values in $cwk(X)$ and an {\rm H}-integrable single-valued function $f:T\to X$.
\end{description}
Moreover, the  previous statements  imply
\begin{description}
\item[\ref{nuovoconfronto}.5)] the family $W_F = \{s(x^*,F) : x^* \in B_{X^*}\}$ is uniformly integrable.
\end{description}
\end{theorem}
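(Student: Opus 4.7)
The plan is to use the R{\aa}dstr{\"o}m-type embedding $i:cwk(X)\to C(\Omega)$ of Theorem~\ref{5.6} as the backbone, reducing the multivalued problem to one for a single-valued mapping into the Banach space $C(\Omega)$. For 1)~$\Leftrightarrow$~2), positive linearity of $i$ together with $\mu(E_i)\ge 0$ yields $i(\sigma(F,\Pi))=\sigma(i(F),\Pi)$, and property \ref{5.6}.1 gives $d_H(\sigma(F,\Pi),J)=\|\sigma(i(F),\Pi)-i(J)\|_\infty$; thus H-integrability of $F$ with integral $J\in cwk(X)$ is equivalent to H-integrability of $i(F)$ with integral $i(J)$, and the norm-closedness \ref{5.6}.2 of $i(cwk(X))$ forces a putative limit of $\sigma(i(F),\Pi)$ to be of the form $i(J)$. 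The equivalence 2)~$\Leftrightarrow$~3) is then the standard Cauchy criterion for H-integrability of $C(\Omega)$-valued functions, exploiting completeness of $C(\Omega)$ and the hereditary property noted in Remark~\ref{ereditarieta}.

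For 1)~$\Rightarrow$~4), I would apply Theorem~\ref{esistenza} to obtain an H-integrable selection $f$ of $F$ and set $G(t):=F(t)+\{-f(t)\}$, which remains $cwk(X)$-valued (a translate of a weakly compact convex set) and satisfies $0\in G(t)$. Writing $L(x):=i(\{x\})$, positive homogeneity of $i$ and $i(\{0\})=0$ imply that $L:X\to C(\Omega)$ is a linear isometric embedding, so $i(G)=i(F)-L\circ f$. Moreover, property \ref{5.6}.3 yields monotonicity of $i$: if $A\subset C$ then $C=\overline{co}(A\cup C)$, hence $i(C)=i(A)\vee i(C)$ and $i(A)\le i(C)$. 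Applied to $\{0\}\subset G(t)$ this gives $i(G(t))\ge 0$, so $G$ is non-negative in the sense of Definition~\ref{weak bochner}. H-integrability of $G$ then follows via 1)~$\Leftrightarrow$~2) from H-integrability of $i(G)=i(F)-L\circ f$. The reverse direction 4)~$\Rightarrow$~1) is immediate by linearity of the H-integral in $C(\Omega)$ applied to $i(F)=i(G)+L\circ f$.

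Finally, for the implication to 5), I would invoke the H\"ormander formula $d_H(A,B)=\sup_{x^*\in B_{X^*}}|s(x^*,A)-s(x^*,B)|$: the Cauchy gauges controlling $d_H(\sigma(F,\Pi),J_T(F))$ produce a common Cauchy control for all the scalar mappings $t\mapsto s(x^*,F(t))$ with $x^*\in B_{X^*}$, so the whole family $W_F$ is simultaneously H-integrable through a single gauge. On a pointwise non-atomic finite measure space, such uniform Henstock-Cauchy control translates into equi-absolute continuity of the corresponding indefinite integrals, that is, uniform integrability. I expect the main obstacle to be the decomposition 1)~$\Rightarrow$~4): besides verifying monotonicity of $i$ from \ref{5.6}.3, one must check that the bounded linear map $L$ carries H-integrability of $f:T\to X$ to H-integrability of $L\circ f:T\to C(\Omega)$, and that the resulting $G$ lies in the correct class of $cwk(X)$-valued H-integrable multifunctions, both of which hinge on the precise behaviour of $i$ on singletons and translates.
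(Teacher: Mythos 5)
Your proposal is correct, and its backbone --- reducing everything to the embedded map $i(F)$ via Theorem~\ref{5.6}, and obtaining the decomposition in \textbf{\ref{nuovoconfronto}.4)} by translating $F$ by an H-integrable selection supplied by Theorem~\ref{esistenza} --- is exactly the paper's. The differences lie in three sub-arguments. For \textbf{\ref{nuovoconfronto}.3)} $\Rightarrow$ \textbf{\ref{nuovoconfronto}.2)} you invoke the norm-Cauchy criterion in the Banach space $C(\Omega)$ together with the closedness of $i(cwk(X))$ from \ref{5.6}.2); the paper instead constructs the limit order-theoretically, as $z=\bigvee_n\bigwedge_{\Pi\in \pgn}\sigma(i(F),\Pi)$, appealing to Dedekind completeness of $C(\Omega)$. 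Your route is arguably cleaner, since it needs only norm completeness. For the non-negativity of $G$ you derive monotonicity of $i$ from \ref{5.6}.3) (if $A\subset C$ then $i(A)\le i(C)$) and apply it to $\{0\}\subset G(t)$, whereas the paper argues through the support functions $s(x^*,G(t))\ge 0$ and the order-preservation of the Kakutani isomorphism; the content is the same, but your derivation is more self-contained, using only the properties actually listed in Theorem~\ref{5.6}. Finally, for \textbf{\ref{nuovoconfronto}.5)} the paper simply quotes the Pettis-integral machinery of Cascales--Kadets--Rodr\'{\i}guez (H-integrability of $i(F)$ yields Pettis integrability of $F$, whence uniform integrability of $W_F$), while you sketch a direct argument via H\"{o}rmander's formula. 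Your sketch leaves implicit the passage from uniform gauge control of the signed quantities $\int_E s(x^*,F)\,d\mu$ to uniform smallness of $\int_E |s(x^*,F)|\,d\mu$, which is what uniform integrability requires: one must split $E$ along the sets $\{s(x^*,F)\ge 0\}$, using that each $s(x^*,F)$, being real-valued and McShane integrable, is Lebesgue measurable. This is routine but should be stated, or the claim simply delegated to the cited results as the paper does.
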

\begin{proof} 
Thanks to Theorem \ref{5.6}, it is easy to see that
{\bf\ref{nuovoconfronto}.1}) is equivalent to {\bf\ref{nuovoconfronto}.2}).
Obviously {\bf\ref{nuovoconfronto}.1}) implies {\bf \ref{nuovoconfronto}.3}).
We now prove that {\bf\ref{nuovoconfronto}.3}) implies {\bf \ref{nuovoconfronto}.2}), i.e.,
 if the Cauchy-Bolzano condition is true, then \g\ is H-integrable.
Taking $\varepsilon = n^{-1}$, then there exists a gauge $\gamma_n$  such that, 
for every pair $(\Pi_1^n, \Pi_2^n)$ of $\gamma_n$-fine partitions of $E$, 
 \[ \|\sigma(\g,\Pi^n_1) - \sigma(\g,\Pi^n_2) \|_{\infty} \leq n^{-1}.\]
Without loss of generality assume that $(\gamma_n)_n$ is decreasing in $n$. \\
Since  $C(\Omega)$ is  Dedekind complete (\cite[Proposition 1.2.4]{mn}), then
$\bigwedge_{\Pi \in \pgn} \sigma(\g,\Pi)$, 
$\bigvee_{\Pi \in \pgn} \sigma(\g,\Pi)$
 are in $C(\Omega)$ and for every $n \in \enne$ 
it is obvious that
\begin{eqnarray*}
\bigwedge_{\Pi \in \pgn} \sigma(\g, \Pi) \leq  \bigvee_{\Pi \in \pgn} \sigma(\g,\Pi).
\end{eqnarray*}
Let $z =  \bigvee_n  \bigwedge_{\Pi \in \pgn} \sigma( \g, \Pi).$ Then  it is easy to see that $z$ verifies the definition of integrability.
In order to prove that {\bf \ref{nuovoconfronto}.2}) implies {\bf\ref{nuovoconfronto}.5}), observe that the single valued function $i(F)$ is H-integrable 
so $i(F)$ is Pettis integrable and then, by 
\cite[Proposition 4.4]{cascales2007a} $F$ is Pettis integrable and so the assertion follows from \cite[Theorem 4.1]{cascales2007a}. 
 \\
Also, it is obvious that {\bf \ref{nuovoconfronto}.4)} implies {\bf \ref{nuovoconfronto}.1)}. Let us prove now that
{\bf \ref{nuovoconfronto}.1)} implies {\bf \ref{nuovoconfronto}.4)}.\\ 
To this aim, let $f$ be any H-integrable selection of $F$ (see Theorem \ref{esistenza}), and 
define $G$ by translation: $F(t)=f(t)+G(t)$. This implies that $0\in G(t)$ for all $t$, and therefore $s(x^*,G(t))\geq 0$ for all $t$ and $x^*$. So the R{\aa}dstr\"{o}m
 embedding of $G(t)$ is a non-negative element of $l^{\infty}(B_{X^*})$; and, since the Kakutani isomorphism preserves order, also $i(G(t))$ is non-negative, and therefore $G$ is a non-negative $cwk(X)$-valued mapping. Integrability of $G$ follows immediately from the fact that $i(F)=i(f)+i(G)$, and linearity of the H-integral.
The last implication follows easily by  {\bf \ref{nuovoconfronto}.2}) and \cite[Theorem 4.1]{cascales2007a}.
\end{proof}

From now on in this subsection, all multivalued mappings will be assumed to be 
$cwk(X)$-valued.
Thanks to well-known results concerning the H-integral of single-valued functions (see \cite[Corollary 2F and Proposition 1C(a)] {fvol4}) and to Theorem \ref{5.6}, one also has easily
\begin{proposition}\label{additivita}
If  \ $F$ is {\rm H}-integrable, then for every $A, B \in \Sigma$ with $A \cap B = \emptyset$ it holds
\[ {\rm (H)}\int_{A \cup B} F \, d\mu  = {\rm (H)}\int_{A} F \, d\mu 
+ {\rm (H)}\int_B F \,  d\mu.\] 
\end{proposition}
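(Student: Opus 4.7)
The plan is to transport the statement to the single-valued world via the R{\aa}dstr\"om--Kakutani embedding $i$ of Theorem \ref{5.6}, use the classical additivity of the Henstock integral in $C(\Omega)$, and then pull the equality back by injectivity of $i$. Since the author advertises the result as a direct consequence of Theorem \ref{5.6} and standard single-valued facts, I expect the proof to be short and essentially bookkeeping.

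First I would apply Theorem \ref{nuovoconfronto} (equivalence \textbf{\ref{nuovoconfronto}.1)} $\Leftrightarrow$ \textbf{\ref{nuovoconfronto}.2)}) to identify the H-integrability of $F$ with the H-integrability of the single-valued function $i(F):T\to C(\Omega)$, together with the formula $i(J_T(F))=(\mathrm{H})\int_T i(F)\,d\mu$. Using the inheritance property recorded in Remark \ref{ereditarieta} (Cousin's Lemma), $i(F)\,1_E$ is H-integrable for every $E\in\Sigma$, and the same identification gives $i(J_E(F))=(\mathrm{H})\int_E i(F)\,d\mu$ for each $E$.

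Next I would invoke the classical additivity of the Henstock integral for Banach-space-valued functions \cite[Corollary 2F and Proposition 1C(a)]{fvol4} applied to $i(F)$ on the disjoint pair $A,B$. This yields
\[
i(J_{A\cup B}(F)) = (\mathrm{H})\!\int_{A}\! i(F)\,d\mu + (\mathrm{H})\!\int_{B}\! i(F)\,d\mu = i(J_A(F)) + i(J_B(F)).
\]
Since $i$ is positively linear, the right-hand side equals $i(J_A(F)+J_B(F))$; note that in $cwk(X)$ the Minkowski sum is automatically closed, so no $\oplus$ closure is involved.

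Finally, Theorem \ref{5.6}.1) says $i$ is an isometry for the Hausdorff distance, hence injective on $cwk(X)$, and I can cancel $i$ on both sides to conclude $J_{A\cup B}(F) = J_A(F)+J_B(F)$. The only delicate point I foresee is making sure the Minkowski-sum step does not require a closure operation in $cwk(X)$; this is exactly the remark made in the paragraph introducing $\oplus$, so it costs nothing. Everything else is routine transport along $i$, which explains why the paper dismisses the proof as immediate.
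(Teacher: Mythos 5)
Your proposal is correct and follows exactly the route the paper intends: the paper's own (one-line) justification is precisely the combination of Theorem \ref{5.6} with the classical single-valued additivity results from \cite{fvol4}, and you have simply filled in the routine transport along $i$ (restriction to measurable sets, positive linearity, injectivity from the isometry). Nothing is missing and nothing differs in substance.
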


The following Proposition compares the integral $J_E(F)$ (when it exists) with the integral $\Phi(F,E)$ of Definition 
\ref{normultintegral}.

 Later, in the Example \ref{nonH}, it will be shown that $\Phi(F,E)$ can be in $cwk(X)$ even when $F$ is not H-integrable.

\begin{proposition}\label{immersa con Phi}
Let  $F$ be {\rm H}-integrable.
Then, for every  $E \in \Sigma$ one has
$J_E(F)=\Phi(F,E),$ and therefore $\Phi(F,E)$ is in $cwk(X)$.
\end{proposition}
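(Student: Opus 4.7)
The plan is to observe that one inclusion, namely $J_E(F)\subset\Phi(F,E)$, is already recorded in Remark \ref{unainclusione}, so everything reduces to proving the reverse inclusion $\Phi(F,E)\subset J_E(F)$ and then reading off the membership in $cwk(X)$ from the fact that, by definition, $J_E(F)\in cwk(X)$.

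To prove $\Phi(F,E)\subset J_E(F)$, I would fix $z\in\Phi(F,E)$ and $n\in\enne$, and play the two definitions against each other by passing to a common refinement of gauges. Precisely, from $z\in\Phi(F,E)$ applied with $\varepsilon=1/n$ I obtain a gauge $\gamma'_n$ such that every $\gamma'_n$-fine partition $\Pi$ of $E$ admits some $c_\Pi\in\sum_\Pi F$ with $\|z-c_\Pi\|\le 1/n$; from the H-integrability of $F$ (together with the first inclusion in \eqref{prima}) I obtain a gauge $\gamma_n$ such that $\sum_\Pi F\subset J_E(F)+n^{-1}B_X$ for every $\gamma_n$-fine partition $\Pi$. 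Setting $\widetilde\gamma_n:=\gamma_n\wedge\gamma'_n$ (which is again a gauge) and picking any $\widetilde\gamma_n$-fine partition $\Pi$ of $E$ (such partitions exist by the Cousin-type lemma invoked in Remark \ref{ereditarieta}), the two conditions hold simultaneously, yielding
\[
z \in c_\Pi + n^{-1}B_X \subset \sum_\Pi F + n^{-1}B_X \subset J_E(F) + 2n^{-1}B_X.
\]

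Letting $n\to\infty$ and using that $J_E(F)$ is norm-closed (being in $cwk(X)$) one concludes $z\in J_E(F)$. Combining both inclusions gives $\Phi(F,E)=J_E(F)\in cwk(X)$, which is exactly the assertion, including the side remark on $\Phi(F,E)$ being weakly compact and convex.

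I do not expect any real obstacle here: the argument is essentially a Cauchy-type refinement of two gauges, and the only thing worth being careful about is making sure that $\widetilde\gamma_n$-fine partitions exist and that the doubling of the error from $n^{-1}$ to $2n^{-1}$ is harmless, both of which are standard. The substantive content has already been packaged into the H-integrability of $F$ and into Remark \ref{unainclusione}, so this proposition is really just the closure step that matches the two approximation schemes.
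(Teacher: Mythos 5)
Your proposal is correct and follows essentially the same route as the paper: the authors likewise obtain the inclusion $J_E(F)\subset\Phi(F,E)$ from the gauge $\gamma_n$ furnished by H-integrability (via the Hausdorff-distance estimate $d_H(\Sigma_{\Pi}F,J_E(F))\leq n^{-1}$, i.e.\ formula (\ref{prima})), then intersect that gauge with the one coming from $z\in\Phi(F,E)$ to get $z\in J_E(F)+2n^{-1}B_X$ and conclude by norm-closedness of $J_E(F)$. The only cosmetic difference is that the paper phrases the first step through the embedding $i$ of Theorem \ref{5.6}, whereas you cite Remark \ref{unainclusione} directly, which contains the same content.
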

\begin{proof}
From the hypothesis one has that, for every integer $n$, a gauge $\gamma_n$ exists, such that
\begin{eqnarray}\label{deflim} 
\| i(\Sigma_{\Pi}F)-i(J_E(F)) \|_{\infty}\leq \frac{1}{n}
\end{eqnarray}
whenever $\Pi$ is a $\gamma_n$-fine partition of $E$, and so, thanks to Theorem \ref{5.6},
$J_E(F)\subset \Sigma_{\Pi}F+ n^{-1} B_X$:
indeed, from Theorem \ref{5.6} it follows that 
$$d_H( \Sigma_{\Pi}F,J_E(F))\leq n^{-1}.$$
This clearly implies that
$$J_E(F)\subset \bigcap_n\bigcup_{\gamma}\bigcap_{P_{\gamma,E}}
\left[\Sigma_{\Pi}F+n^{-1} B_X\right]=\Phi(F,E).$$
(This was also proved in (\ref{prima})).
On the other hand, from the formula (\ref{deflim}) it can be deduced that
$\Sigma_{\Pi}F\subset J_E(F)+n^{-1} B_X.$
Now, if $z\in \Phi(F,E)$, for the same integer $n$ a gauge
 $\gamma^{\prime}_n$ exists, such that $z\in \Sigma_{\Pi}F+ n^{-1} B_X$ for every $\gamma^{\prime}_n$-fine partition $\Pi$ of $E$. So, if one takes $\gamma^*_n:=\gamma_n\cap \gamma^{\prime}_n$, then, for every $\gamma^*_n$-fine partition $\Pi$ of $E$: 
$\Sigma_{\Pi}F\subset J_E(F)+n^{-1} B_X,\  z\in \Sigma_{\Pi}F +n^{-1} B_X,$
and therefore
$z\in J_E(F)+2n^{-1}B_X.$
Since this must be true for all $n$, and $J_E(F)$ is closed, it follows that $z\in J_E(F)$, and so the converse inclusion is proved. 
\end{proof}
Due to this fact one can consider $\Phi(F,E)$ as a multivalued set function, as soon as $F:T\to cwk(X)$ is 
H-integrable, and for  any Banach space $X$. In particular 
\begin{definition}\label{multisub} \rm \cite[Definition 3.1]{cascales2007a}
A multivalued set function 
 $M:\Sigma \rightarrow cwk(X)$ is a  {\em finitely additive} (respectively {\em countably additive})   multimeasure if
 $M(\emptyset) = {0}$ and 
 $M(A \cup B) = M(A)+ M(B)$ for every $A,B \in \Sigma$ , with $A \cap B = \emptyset$
(respectively if for every disjoint sequence $(E_n)$ in $\Sigma$ the series $\sum_n M(E_n)$ is unconditionally convergent and $M(\cup_n (E_n)) = \sum_n M(E_n)$).
\end{definition}

\begin{corollary}
Let $F: T \rightarrow cwk(X)$ be any 
{\rm H}-integrable multifunction.  Then, for every $E \in \Sigma$,  $M(E):=\Phi(F,E)$ is a countably  additive multimeasure.
Moreover, in the topology of $C(\Omega)$, $M$ is $\sigma$-additive and $\mu$-absolutely continuous.
\end{corollary}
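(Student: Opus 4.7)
The strategy is to transfer the question, via the R{\aa}dstr\"{o}m-type embedding $i$ of Theorem \ref{5.6}, to the countable additivity and absolute continuity of the H-integral of the single-valued mapping $i(F):T\to C(\Omega)$, and then pull the conclusion back to $cwk(X)$. First, by Proposition \ref{immersa con Phi}, one has $M(E)=\Phi(F,E)=J_E(F)$ for every $E\in\Sigma$, so it suffices to work with $J_E(F)$. Moreover, by Theorem \ref{nuovoconfronto}, the single-valued map $i(F)$ is H-integrable in $C(\Omega)$ and $i(J_E(F))=\mathrm{(H)}\int_E i(F)\,d\mu$ for every $E\in\Sigma$ (the heredity of H-integrability on measurable subsets has been noticed in Remark \ref{ereditarieta} and after Definition \ref{normintegral}). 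So the whole analysis reduces to the Banach-space valued vector measure $\nu(E):=\mathrm{(H)}\int_E i(F)\,d\mu$.

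The finite additivity $M(A\cup B)=M(A)+M(B)$ for disjoint $A,B\in\Sigma$ is immediate from Proposition \ref{additivita} (the sum on the right is already closed since it takes place in $cwk(X)$). For the countable additivity I would argue as follows: since the Henstock integral in this setting coincides with the McShane integral (and hence with Pettis integrability for single-valued functions, see Remark \ref{ereditarieta} and \cite{f1994a,f1995}), the map $\nu:\Sigma\to C(\Omega)$ is a $\sigma$-additive, $\mu$-continuous vector measure; in particular, if $(E_n)_n$ is a pairwise disjoint sequence in $\Sigma$, then the series $\sum_n \nu(E_n)$ converges unconditionally (Orlicz--Pettis) to $\nu(\bigcup_n E_n)$ in the norm of $C(\Omega)$, and $\|\nu(E)\|_\infty\to 0$ as $\mu(E)\to 0$.

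Finally I would translate these properties back to $cwk(X)$ using the isometric identity
\[
d_H(A,B)=\|i(A)-i(B)\|_\infty, \qquad A,B\in cwk(X),
\]
together with the positive linearity of $i$ and the fact that $i(cwk(X))$ is norm-closed in $C(\Omega)$ (Theorem \ref{5.6}.2)). Indeed, positive linearity gives $i\bigl(\sum_{n=1}^N M(E_n)\bigr)=\sum_{n=1}^N \nu(E_n)$, and the Cauchy condition for the partial sums in $C(\Omega)$ transfers, through $d_H=\|\cdot\|_\infty$, to a Cauchy condition for the Minkowski partial sums in $(cwk(X),d_H)$; closedness of $i(cwk(X))$ ensures that the limit lies in $cwk(X)$ and coincides with $M(\bigcup_n E_n)$, which yields the desired unconditional convergence (independent of any rearrangement, by Orlicz--Pettis applied to $\nu$). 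The same isometric identity converts $\|\nu(E)\|_\infty\to 0$ into $d_H(M(E),\{0\})\to 0$ as $\mu(E)\to 0$, giving the $\mu$-absolute continuity in the $C(\Omega)$-topology.

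The main technical point is the second paragraph: ensuring that the single-valued H-integral produces a genuinely $\sigma$-additive vector measure on $\Sigma$. This is where the hypothesis that $\mu$ is pointwise non-atomic and regular on a compact metric space plays the decisive role, since under it Henstock and McShane integrability coincide and the integral is known to be $\sigma$-additive and $\mu$-absolutely continuous; everything else is just bookkeeping through the embedding $i$.
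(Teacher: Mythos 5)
Your proposal is correct and follows essentially the same route as the paper: reduce via Proposition \ref{immersa con Phi} and the embedding of Theorem \ref{5.6} to the single-valued map $i(F)$, use the coincidence of Henstock and McShane integration for pointwise non-atomic $\mu$ together with Fremlin's result that McShane implies Pettis, and conclude by the $\sigma$-additivity and $\mu$-continuity of the Pettis integral. The only cosmetic difference is that for countable additivity the paper invokes the set-valued Pettis result \cite[Theorem 4.1]{cascales2007a} directly, whereas you transfer the unconditional convergence back to $(cwk(X),d_H)$ by hand through the isometry and the closedness of $i(cwk(X))$; both arguments are sound.
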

\begin{proof}
The first part is an immediate consequence of Propositions \ref{immersa con Phi}, 
\ref{nuovoconfronto}  and   \cite[Theorem 4.1]{cascales2007a} since $F$ is Pettis integrable. 
As to the second part, thanks to  \cite[Proposition 2.3]{cs2015},
   \g\ is McShane integrable. So, by
\cite[Theorem 1Q]{f1995},   \g\ is Pettis integrable and therefore $M$ turns out to be $\sigma$-additive and $\mu$-continuous, thanks to well-known properties of the Pettis integral.
\end{proof}
Finally:
\begin{proposition}
If $F: T \to cwk(X)$ is {\rm H}-integrable then, for every $E \in \Sigma$ it is:
\[ {\rm(H)} \int_E F d\mu =  \overline{\left\{\int_E f d\mu, f \in S^1_{Pe}\right\}},
\]
(here, $S^1_{Pe}$ has a similar meaning as $S^1_{F,H}$, but with Pettis in the place of Henstock integrability)
while, if $X$ is reflexive
\[ {\rm(H)} \int_E F d\mu = 
\overline{\left\{\int_E f d\mu, f \in S^1_{F,H} \right\}}\]
\end{proposition}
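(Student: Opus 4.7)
The plan is to reduce the statement to the representation theorem for the Pettis multivalued integral, using the embedding of Theorem \ref{5.6}.

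First, from the chain of equivalences in Theorem \ref{nuovoconfronto} (in particular \ref{nuovoconfronto}.1 $\Leftrightarrow$ \ref{nuovoconfronto}.2 together with the implication \ref{nuovoconfronto}.5), the embedded function $i(F)$ is H-integrable in $C(\Omega)$ with $(H)\int_E i(F)\, d\mu = i(J_E(F))$, and the family $W_F$ of scalar support functions is uniformly integrable. Because $\Omega$ is compact Hausdorff and evaluation at the points of $\Omega$ corresponding (via the R{\aa}dstr\"om embedding) to $x^*\in B_{X^*}$ returns the support functional $s(x^*,\cdot)$, one reads off
\[
s(x^*,J_E(F)) \;=\; \int_E s(x^*,F)\, d\mu \qquad \text{for every } x^*\in X^*.
\]
This identifies $J_E(F)$ with the Pettis multivalued integral $(P)\int_E F\, d\mu$ in the sense of \cite{cascales2007a}.

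Next, I would invoke the representation theorem for Pettis integrable $cwk(X)$-valued multifunctions of Cascales-Kadets-Rodr\'{\i}guez (\cite[Theorem 4.1]{cascales2007a}), which asserts that the Pettis multivalued integral coincides with the norm-closure of the set of Pettis integrals of Pettis integrable selections. Combined with the identification above, this yields
\[
{\rm (H)}\int_E F\, d\mu \;=\; J_E(F) \;=\; \overline{\left\{\int_E f\, d\mu : f\in S^1_{Pe}\right\}},
\]
which is the first equality.

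For the reflexive case, the strategy is to show that $S^1_{Pe}=S^1_{F,H}$, after which the second equality is a consequence of the first. One inclusion is trivial, since every H-integrable function is Pettis integrable. For the converse I would use that a reflexive Banach space contains no isomorphic copy of $c_0$, so by the classical Fremlin theorem (cf.\ \cite{fvol4}) every Pettis integrable function with values in such a space is McShane integrable; since $\mu$ is pointwise non-atomic, McShane and H-integrability coincide here (as recalled before Definition \ref{fnorm} and in \cite[Proposition 2.3]{cs2015}), so any $f\in S^1_{Pe}$ belongs to $S^1_{F,H}$.

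The main obstacle will be the first step: turning H-integrability of $i(F)$ in $C(\Omega)$ into the scalar formula for $s(x^*,F)$ requires that integration in $C(\Omega)$ commutes with evaluation at the point of $\Omega$ corresponding to each $x^*$. The uniform integrability supplied by \ref{nuovoconfronto}.5 is precisely what guarantees Pettis integrability of $i(F)$ in $C(\Omega)$, and hence the interchange that transports the result back to $F$.
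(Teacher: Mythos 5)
Your treatment of the first equality is essentially correct, though it follows a different path from the paper's: you identify $J_E(F)$ with the set-valued Pettis integral (which the paper also does, via Theorem \ref{nuovoconfronto} and \cite[Proposition 4.4, Theorem 4.1]{cascales2007a}) and then appeal to the Cascales--Kadets--Rodr\'{\i}guez representation of the Pettis multivalued integral as the closure of the integrals of its Pettis integrable selections, whereas the paper deduces both equalities at once from \cite[Theorem 4.3]{BS2011}, after observing that the relevant Aumann integrals are non-empty by \cite[Theorem 2.5]{cascales2007a} and Theorem \ref{esistenza}. Up to the exact numbering of the result you quote from \cite{cascales2007a}, this half is fine; note only that the passage from H-integrability of $i(F)$ to $s(x^*,J_E(F))=\int_E s(x^*,F)\,d\mu$ does not require the detour through point evaluations on $\Omega$, since $s(x^*,\cdot)$ is $d_H$-Lipschitz and H\"ormander's equality applies directly to the Riemann sums.

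The reflexive case, however, contains a genuine gap. There is no ``classical Fremlin theorem'' asserting that every Pettis integrable function with values in a Banach space containing no copy of $c_0$ is McShane integrable. The role of the $c_0$-condition in Fremlin's results (\cite[Corollary 9]{f1994a}, via Bessaga--Pe{\l}czy\'{n}ski) is to upgrade a function that is \emph{already Henstock integrable} and scalarly integrable to a McShane integrable one; it gives nothing for a function that is merely Pettis integrable. The implication ``Pettis $\Rightarrow$ McShane'' is known for super-reflexive spaces and for $c_0(\Gamma)$ (Di Piazza--Preiss) --- the latter case already showing that absence of $c_0$ is not the operative hypothesis --- and no such theorem is available for general reflexive spaces. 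Hence your claimed identity $S^1_{Pe}=S^1_{F,H}$ is unsupported, and with it the inclusion ${\rm(H)}\int_E F\,d\mu\subset\overline{\left\{\int_E f\,d\mu: f\in S^1_{F,H}\right\}}$, which is the non-trivial half of the second equality. A repair must come from elsewhere: either the route through \cite[Theorem 4.3]{BS2011} taken by the paper, where reflexivity serves only to dispense with a cardinality (``measure-free cardinal'') hypothesis, or a direct production of sufficiently many H-integrable selections, e.g.\ by noting that the construction in Theorem \ref{esistenza} realizes every strongly exposed point of $J_E(F)$ as the integral of an H-integrable selection and that the set of such integrals is convex.
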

\begin{proof}
 Observe that the Aumann integrals involved are non empty thanks to \cite[Theorem 2.5]{cascales2007a} and Theorem \ref{esistenza} respectively.
So the assertion is an obvious consequence of \cite[Theorem 4.3]{BS2011}
since $\mu$ is pointwise non atomic and reflexivity of the space avoids the hypothesis on free cardinals. 
\end{proof}
\subsection{Order multivalued integrals}\label{o-multi}
Now, the multivalued integral in the {\em order} sense will be studied, with the following assumption:
\begin{description}
\item[$(H_0)$]
 {\bf $X$  is a weakly $\sigma$-distributive Banach lattice with an
order continuous norm, $\|\cdot \|.$}
\end{description}
Also in this subsection, a small survey is presented, comparing some different notions of integral in the {\em order} sense. Some new results will be obtained, similar to those already existing in the normed case: nevertheless, sometimes there are  differences, which
 may look at first sight not so deep as they really are.

\begin{definition}\label{starintegral}\rm (\cite[Definition 3.6, Proposition 3.7]{bms})
Let \mbox{$F:T \to 2^X$} be a multifunction with non-empty values, 
and $E \in \Sigma$. The \it $(o)$-integral \rm of $F$ on $E$ is the set
$$\Phi^o(F,E):= \bigcup_{(b_{n})_{n}, (\gamma_n)_n}  \bigcap_{n}  \bigcap_{\pgn}  {\cal U}\left ( \Sigma_{\Pi}(F) , b_n \right),$$
where $(b_n)_n$ denotes  any $(o)$-sequence, $(\gamma_n)_n$
 any sequence of {\em gauges}, $P_{\gamma_n}$ the family of all $\gamma_n$-fine
 partitions of $E$, and\,   ${\cal U}(C, b):= \{ z\in X\, \mbox{ such that}\,\, |z-y|\leq b\,\,  \mbox{for some}\,\,  y\in C\}$.
\end{definition}

\noindent Of course, when $F$ is single-valued, $F=\{f\}$,  the integral 
$\Phi^{o}(F,E)\equiv  {\rm (oH)} \int_E f d\mu$,
if non-empty, is a singleton, and in this case $f$ is  {\rm oH}-integrable. 
Moreover, for multivalued mappings $F:T\to cwk(X)$ and  for any measurable set $E$ the following inclusion holds, thanks to order-continuity of the norm:
$\Phi^{o}(F,E)\subset \Phi(F,E);$
in case $X$ is also an $M$-space, then also the reverse inclusion holds true as in \cite[Proposition 3.14]{bms}.

Similarly as for the norm integral, an alternative notion can be given, parallel to Definition \ref{normintegral}:

\begin{definition}\label{oHmultivoco}\rm
Let $F:T\to cwk(X)$ be any multifunction.  $F$ is {\em{\rm oH}-integrable} if,
for every measurable $E\subset T$ there exist an element $J_E\in cwk(X)$, an $(o)$-sequence $(b_n)_n$ in $X$ and a corresponding sequence $(\gamma_n)_n$ of gauges in $T$, such that, for every $n$ and every $\gamma_n$-fine partition $\Pi$ of $E$,\  \
$\Sigma_{\Pi}(F)\subset {\cal U}\left (J_E , b_n \right),\ \ {\rm and}\ \ J_E\subset {\cal U}\left (\Sigma_{\Pi}(F) , b_n \right).$
\end{definition}
\begin{proposition}\label{unicoint}
If $F:T\to cwk(X)$ is {\rm oH}-integrable, then the element
$J_E$ is unique.
\end{proposition}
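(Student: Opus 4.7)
The plan is to assume two candidates $J_E, J'_E \in cwk(X)$ both satisfy Definition \ref{oHmultivoco}, with (possibly different) $(o)$-sequences $(b_n)_n$, $(b'_n)_n$ and corresponding gauge sequences $(\gamma_n)_n$, $(\gamma'_n)_n$, and to compare them by working with a common refinement of the two gauges.

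First I would set $\delta_n(t) := \min\{\gamma_n(t), \gamma'_n(t)\}$ for each $t\in T$ and each $n$. By the Cousin-type Lemma mentioned in Remark \ref{ereditarieta}, there exists a $\delta_n$-fine partition $\Pi_n$ of $E$, which is simultaneously $\gamma_n$-fine and $\gamma'_n$-fine. Applying the definition of oH-integrability to both $J_E$ and $J'_E$ with this $\Pi_n$, for any $y\in J_E$ I would first find $c\in\Sigma_{\Pi_n}(F)$ with $|y-c|\leq b_n$, and then $y'\in J'_E$ with $|c-y'|\leq b'_n$. The triangle inequality for the modulus in the Riesz-space sense then yields $|y-y'|\leq b_n+b'_n$, and by symmetry every point of $J'_E$ admits an analogous approximation from $J_E$.

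Next I would convert these order estimates into norm statements. The sequence $(b_n+b'_n)_n$ is again an $(o)$-sequence in $X$; by the standing hypothesis $(H_0)$, the norm of $X$ is order continuous, hence $\|b_n+b'_n\|\to 0$. Monotonicity of the Banach lattice norm then gives $\|y-y'_n\|\leq \|b_n+b'_n\|\to 0$ for the sequence $(y'_n)\subset J'_E$ associated with a fixed $y\in J_E$. Since $J'_E\in cwk(X)$ is in particular norm-closed, I conclude $y\in J'_E$, whence $J_E\subset J'_E$; interchanging the roles completes the argument.

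The main obstacle I anticipate is precisely the bridge from the pointwise order inequality to a genuine set-theoretic equality: this is the step at which hypothesis $(H_0)$ is indispensable, both through order continuity (to extract norm convergence from an $(o)$-sequence bound) and through monotonicity of the lattice norm (to dominate $\|y-y'\|$ by $\|b_n+b'_n\|$). Once this transition is secured, the remainder of the argument reduces to a standard triangle estimate and the Cousin-type existence of a common refinement of two gauges.
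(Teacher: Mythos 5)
Your proof is correct and follows essentially the same route as the paper's: take a common refinement $\gamma_n\wedge\gamma'_n$ of the two gauges, chain the two inclusions from Definition \ref{oHmultivoco} through a single $\gamma_n\wedge\gamma'_n$-fine partition to get $|y-y'_n|\leq b_n+b'_n$, and then use order continuity of the norm together with norm-closedness of $J'_E$ to conclude $J_E\subset J'_E$, finishing by symmetry. The only difference is cosmetic: you make explicit the appeal to the Cousin Lemma and to hypothesis $(H_0)$, which the paper leaves implicit in the phrase ``$(o)$-convergent \dots and so also norm-convergent.''
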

\begin{proof}
 Without loss of generality,  assume that $E=T$, and suppose that $J:=J_T$ and $J':=J'_T$ are two elements of $cwk(X)$ satisfying the condition in Definition \ref{oHmultivoco}. Let $(b_n)_n$ and $(b'_n)_n$ be the $(o)$-sequences relative to $J$ and $J'$ respectively, and $(\gamma_n)_n$, $(\gamma'_n)_n$ be the corresponding gauges. Then, if we fix $n$, and take any $(\gamma_n\wedge \gamma'_n)$-fine partition $\Pi$, we get
$$J\subset  {\cal U}\left (\Sigma_{\Pi}(F) , b_n \right)\subset {\cal U}\left (J' , b_n+b_n' \right),$$
and also
$$J'\subset  {\cal U}\left (\Sigma_{\Pi}(F) , b'_n \right)\subset {\cal U}\left (J , b_n+b_n' \right).$$
So, for every element $a\in J$ and every integer $n$, there exists an element $a'_n\in J'$  such that $|a-a'_n|\leq b_n+b'_n$. This clearly means that the sequence $(a'_n)_n$ is $(o)$-convergent to $a$, and so also norm-convergent to $a$. Since $J'$ is closed, this implies $a\in J'$. So, $J\subset J'$. Reversing the role of $J$ and $J'$, also the converse inclusion is obtained, and then $J=J'$. 
\end{proof}
This integral is related to $\Phi^{o}$, in the following sense.

\begin{theorem}\label{confrontoPhistar}
Let $F$ be as above, and assume it is {\rm oH}-integrable. Then, for every measurable $E\subset T$,
$\Phi^{o}(F,E)=J_E,$
and so $\Phi^{o}(F,E)$ is in $cwk(X)$.
\end{theorem}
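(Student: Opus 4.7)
The statement splits into the two set inclusions $J_E \subseteq \Phi^o(F,E)$ and $\Phi^o(F,E) \subseteq J_E$, and the $cwk(X)$ membership of $\Phi^o(F,E)$ follows at once from the equality since $J_E \in cwk(X)$ by hypothesis. So my plan is to establish each inclusion separately.

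The first inclusion is essentially a reading of the definitions: if $z \in J_E$, then by the second condition in Definition \ref{oHmultivoco} one has $z \in J_E \subset \mathcal{U}(\Sigma_\Pi(F), b_n)$ for every $n$ and every $\gamma_n$-fine partition $\Pi$ of $E$. This is precisely the condition to belong to $\Phi^o(F,E)$, witnessed by the same $(o)$-sequence $(b_n)_n$ and the same sequence of gauges $(\gamma_n)_n$.

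For the converse inclusion, I would use the refinement trick already exploited in Proposition \ref{immersa con Phi} and Proposition \ref{unicoint}. Fix $z \in \Phi^o(F,E)$, witnessed by an $(o)$-sequence $(c_n)_n$ and a sequence of gauges $(\delta_n)_n$. For each $n$ choose any $(\gamma_n \wedge \delta_n)$-fine partition $\Pi_n$ of $E$; then there exists $y_n \in \Sigma_{\Pi_n}(F)$ with $|z-y_n| \leq c_n$, and by the first condition in Definition \ref{oHmultivoco} there exists $w_n \in J_E$ with $|y_n - w_n| \leq b_n$. The triangle inequality in $X$ yields $|z - w_n| \leq b_n + c_n$.

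The step I expect to be the main (though not deep) point requiring care is concluding from this bound that $z \in J_E$, because here order and norm arguments need to be mixed cleanly. This is where assumption $(H_0)$ enters: order continuity of the norm gives $\|b_n\| \to 0$ and $\|c_n\| \to 0$, hence $\|z - w_n\| \to 0$, so the sequence $(w_n) \subset J_E$ converges to $z$ in norm. Since $J_E$ is weakly compact, it is weakly closed and a fortiori norm-closed; therefore $z \in J_E$, completing the proof.
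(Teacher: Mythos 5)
Your proposal is correct and follows essentially the same route as the paper: the first inclusion is read off from Definition \ref{oHmultivoco}, and the converse uses a common refinement of the two gauge sequences to get $|z-w_n|\leq b_n+c_n$ with $w_n\in J_E$, then passes from the order bound to norm convergence and invokes closedness of $J_E$. The only (harmless) difference is that you make explicit the appeal to order continuity of the norm from $(H_0)$, where the paper simply refers back to the argument of Proposition \ref{unicoint}.
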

\begin{proof}
Again, the proof will be given only for the case $E=T$.
Let $(b_n)_n$ and $(\gamma_n)_n$ be the $(o)$-sequence and the corresponding sequence of gauges regulating oH-integrability of $F$. \\
Let $w$ be any arbitrary element of $J_T$, and fix $n$. Then, if $\Pi$ is any $\gamma_n$-fine partition, clearly 
$w\in {\cal U}\left (\Sigma_{\Pi}(F) , b_n \right).$
But this is precisely the condition that $w\in \Phi^{o}(F,T)$. By arbitrariness of $w$, one obtains the inclusion $J_T\subset  \Phi^{o}(F,T)$ (and also that  $\Phi^{o}(F,T)\neq \emptyset$).
\\
In order to prove the converse inclusion, take any element $z\in \Phi^{o}(F,T)$, and let $(b'_n)_n$ and $(\gamma'_n)_n$ be the $(o)$-sequence and the corresponding sequence of gauges related to the definition of $\Phi^{o}(F,T)$. Now, if $\Pi$ is any $(\gamma_n\wedge \gamma'_n)$-fine partition, 
$$z\in {\cal U}\left (\Sigma_{\Pi}(F) , b'_n \right)\subset {\cal U}\left (J_T , b'_n+b_n \right).$$
As above, this implies that $z$ is in the norm-closure of $J_T$, i.e. $z\in J_T$. By arbitrariness of $z$, this gives $\Phi^{o}(F,T)\subset J_T.$
This proves the reverse inclusion, and therefore  the announced equality. 
\end{proof}

Observe that, also for the order integral, the set $\Phi^o(F,T)$ can be in $cwk(X)$ even when $F$ is not oH-integrable, see Example \ref{nonH}.\\

\noindent {\bf From now on, order-boundedness of the sets $F(t)$ will be assumed, for any multivalued mapping $F$ and  every $t\in T$.}

\begin{remark} \label{semplici}\rm
Observe that, thanks to \cite[Theorem 3.12]{bms} if $F$ is a simple function with values in $cwk(X)$ (namely 
$F = \oplus_{i \leq n} C_i 1_{E_i}, C_i \in cwk(X), E_i \cap E_j = \emptyset, i \neq j,  i \leq n$) then
 $\Phi^{o}(F,E)$ is in $cwk(X)$ and 
$$
\Phi^o (F,E) = \sum_{i \leq n} C_i \mu (E_i) = \{{\rm (oH)}\int_E f d\mu: \
 f(t)\in F(t)\ \mu-a.e.\}
$$
In fact, in  the quoted result the equivalence is stated among the $\Phi^o$-integral and the {\em order}-closure of the Aumann-Henstock integral, when the $C_i$'s are in $cbf(X)$. But  $C_i \in cwk(X)$ so one has direct coincidence with the Aumann integral.
\end{remark}
A kind of selection theorem is stated now.
This result is parallel in some sense to the  Kuratowski-Ryll Nardzewski selection theorem, but related to the order structure of the space $X$ rather than to its topology. We shall see later also some consequences.
\begin{theorem}\label{quasiselezione}
Let $F:T\to cwk(X)$ be any {\rm oH}-integrable mapping, with integral $J$, and define
$g(t):=\sup {F(t)},\ \ S:=\sup{J}.$
Then, $g$ is {\rm oH}-integrable, and its integral is $S$.
\end{theorem}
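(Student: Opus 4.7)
The plan is to reduce the multivalued statement to a direct verification of the single-valued oH-integrability of $g$ in the sense of Definition \ref{forder}, by exploiting the compatibility of the lattice supremum with the Minkowski and positive scalar operations that enter the Riemann sums.

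First, I would address well-definedness. Under hypothesis $(H_0)$ the Banach lattice $X$ is Dedekind complete, so the assumed order-boundedness of each $F(t)$ ensures that $g(t)=\sup F(t)$ exists in $X$ for every $t\in T$. To see that $S=\sup J$ also exists, fix any index $n$ and any $\gamma_n$-fine partition $\Pi$ from Definition \ref{oHmultivoco}: the Riemann sum $\Sigma_{\Pi}(F)=\sum_{i} \mu(E_i)\,F(t_i)$ is order-bounded as a finite Minkowski combination of order-bounded sets, and the inclusion $J\subset {\cal U}(\Sigma_{\Pi}(F),b_n)$ forces $J$ to be order-bounded above by $\sup \Sigma_{\Pi}(F)+b_n$ (and symmetrically below).

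Next, I would establish the key identity
\[
\sup\Sigma_{\Pi}(F)\;=\;\sum_{i=1}^{k}\mu(E_i)\,\sup F(t_i)\;=\;\sigma(g,\Pi)
\]
valid for any partition $\Pi=\{(E_i,t_i):i=1,\ldots,k\}$. This follows from the vector-lattice properties $\sup(A+B)=\sup A+\sup B$ and $\sup(\lambda A)=\lambda\,\sup A$ for $\lambda\geq 0$ and order-bounded $A,B\subset X$, iterated over the finite index set; no closure operation is needed, since each $F(t_i)\in cwk(X)$ makes the Minkowski sum automatically closed, as noted earlier in the paper. Then I would prove the bridging lemma: if $A,B\subset X$ are order-bounded and $A\subset {\cal U}(B,b)$, $B\subset {\cal U}(A,b)$ for some $b\in X^{+}$, then $|\sup A-\sup B|\leq b$. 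Indeed, every $a\in A$ admits $b'\in B$ with $|a-b'|\leq b$, so $a\leq b'+b\leq \sup B+b$, whence $\sup A\leq \sup B+b$; reversing the roles gives the symmetric inequality, and taking the modulus yields the claim.

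Applying this lemma to $A=\Sigma_{\Pi}(F)$ and $B=J$, and combining with the identity above and the oH-integrability hypothesis of $F$, I would conclude that for every $n$ and every $\gamma_n$-fine partition $\Pi$ of $T$,
\[
|\sigma(g,\Pi)-S|\;=\;|\sup\Sigma_{\Pi}(F)-\sup J|\;\leq\;b_n,
\]
which is precisely Definition \ref{forder} applied to $g$ with integral $S$. The main technical point is the lattice identity in the second step, which implicitly exploits Dedekind completeness to handle suprema of the possibly uncountable sets $F(t)$ and $J$; once this is granted, the remainder is a straightforward translation from the multivalued condition of Definition \ref{oHmultivoco} to the single-valued one of Definition \ref{forder}.
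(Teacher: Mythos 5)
Your proposal is correct and takes essentially the same route as the paper: the paper's step-by-step isolation of the points $\alpha_1,\alpha_2,\dots$ in the Riemann sum is exactly an inline proof of your lattice identity $\sup\Sigma_{\Pi}(F)=\sigma(g,\Pi)$, and your bridging lemma is the paper's passage from the inclusions $\Sigma_{\Pi}(F)\subset {\cal U}(J,b_n)$ and $J\subset {\cal U}(\Sigma_{\Pi}(F),b_n)$ to the two inequalities $\sigma(g,\Pi)\leq S+b_n$ and $S\leq \sigma(g,\Pi)+b_n$. Your write-up is merely more modular (and usefully makes explicit why $\sup J$ exists), but the argument is the same.
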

\begin{proof}
 Let $(b_n)_n$ and $(\gamma_n)_n$ be
the sequences  
introduced in the Definition \ref{oHmultivoco}, and fix any $\gamma_n$-fine partition $\Pi$. Then 
\begin{eqnarray}\label{eraprima}
\sigma(g,\Pi)\in   {\cal V}\left (S , b_n \right)\end{eqnarray}
where  
$${\cal V}\left (A , b \right):=\{ z \in X :\, \exists\, a_0 \in A \,\, \mbox{with}\,\, z \leq a_0 + b\}, $$
   for every $
(A,b) \in  (cwk(X),X^{++}).$
Indeed, as $\Pi\equiv (t_i,I_i)$ is $\gamma_n$-fine, 
$\sum\alpha_i\mu(I_i)\leq S+b_n,$
for every choice of the points $\alpha_i\in F(t_i)$. Hence
$$\alpha_1\mu(I_1)\leq S+b_n-\sum_{i>1}\alpha_i\mu(I_i),$$
and, by varying just $\alpha_1$ one gets
$g(t_1)\mu(I_1)\leq  S+b_n-\sum_{i>1}\alpha_i\mu(I_i),$
from which
$$\sum_{i>1}\alpha_i\mu(I_i)\leq -g(t_1)\mu(I_1)+S+b_n.$$
Now, isolating $\alpha_2$ and proceeding in the same fashion, it follows
$$\sum_{i>2}\alpha_i\mu(I_i)\leq -g(t_1)\mu(I_1)-g(t_2)\mu(I_2)+S+b_n;$$
then it is clear that, continuing in this way,  (\ref{eraprima}) is proven.
\\
On the other hand, easily one has
$$J\subset {\cal V}\left (\Sigma_{\Pi}(F) , b_n \right)\subset {\cal V}\left(\sigma(g,\Pi) , b_n \right),$$
hence
$S\leq \sigma(g,\Pi)+b_n.$
So, there exist an $(o)$-sequence $(b_n)_n$ and a sequence $(\gamma_n)_n$ of gauges such that, for every $n$ and every $\gamma_n$-fine partition $\Pi$,
$\sigma(g,\Pi)\leq S+b_n,\  {\rm  and} \  S\leq \sigma(g,\Pi)+b_n$
i.e. $$|\sigma(g,\Pi)-S|\leq b_n,$$
from which the assertion follows. 
\end{proof}
In some cases, one can obtain a decomposition  similar to Theorem \ref{nuovoconfronto}.4), for {\rm oH}-integrable multifunctions (see also Theorem \ref{deco})
\begin{theorem}\label{deco2}
Let $F:T\to cwk(X)$ be any {\rm oH}-integrable function, such that
\begin{description}
\item[(\ref{deco2}.1)]
 $\sup{F(t)}\in F(t)$ for each $t\in T$. 
\end{description}
Then $F$ is the sum of an {\rm oH}-integrable single-valued mapping $g:T\to X$ and an {\rm oH}-integrable mapping $G:T\to cwk(X)$ such that $s(x^*,G(t))\geq  0$ for all elements $x^*\in X^*$ and $s(x^*,G(t))=0$ for all positive elements $x^*\in X^*$.
\end{theorem}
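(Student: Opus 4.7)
The plan is to realize the decomposition by taking the pointwise supremum of $F$ as the single-valued part and the difference as the ``non-positive'' multivalued part, and then to transfer oH-integrability from $F$ to each piece by linearity.

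First, I would set $g(t):=\sup F(t)$, which by hypothesis (\ref{deco2}.1) is a genuine selection $g(t)\in F(t)$. Theorem \ref{quasiselezione} applies directly and gives that $g$ is oH-integrable with integral $S:=\sup J$, where $J=J_T$ denotes the oH-integral of $F$. Then I would define $G(t):=F(t)+\{-g(t)\}$. Since translation by a fixed element preserves convexity and weak compactness, $G:T\to cwk(X)$. Two order properties are immediate: on the one hand $g(t)\in F(t)$ gives $0\in G(t)$, and on the other hand every $a\in F(t)$ satisfies $a\leq g(t)$, so $G(t)\subseteq -X^+$.

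Next I would read off the support-function identities. From $0\in G(t)$ we get $s(x^\ast,G(t))\geq x^\ast(0)=0$ for every $x^\ast\in X^\ast$. If in addition $x^\ast\geq 0$, then $z\leq 0$ for all $z\in G(t)$ implies $x^\ast(z)\leq 0$, whence $s(x^\ast,G(t))\leq 0$; combined with the previous inequality this yields $s(x^\ast,G(t))=0$ on the positive cone of $X^\ast$, exactly as claimed.

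It remains to prove that $G$ itself is oH-integrable. The natural candidate integral is $J-S:=J+\{-S\}\in cwk(X)$. Let $(b_n^F)_n,(\gamma_n^F)_n$ witness oH-integrability of $F$ and $(b_n^g)_n,(\gamma_n^g)_n$ witness oH-integrability of $g$ (via Theorem \ref{quasiselezione}). Set $b_n:=b_n^F+b_n^g$ and $\gamma_n:=\gamma_n^F\wedge\gamma_n^g$; clearly $(b_n)_n$ is still an $(o)$-sequence. For any $\gamma_n$-fine partition $\Pi$ one has the identity $\Sigma_\Pi(G)=\Sigma_\Pi(F)-\sigma(g,\Pi)$, so given $z\in\Sigma_\Pi(G)$ write $z=y-\sigma(g,\Pi)$ with $y\in\Sigma_\Pi(F)$; by oH-integrability of $F$ there exists $w\in J$ with $|y-w|\leq b_n^F$, and $|\sigma(g,\Pi)-S|\leq b_n^g$, so $|z-(w-S)|\leq b_n$, showing $\Sigma_\Pi(G)\subset\mathcal{U}(J-S,b_n)$. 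The reverse inclusion $J-S\subset\mathcal{U}(\Sigma_\Pi(G),b_n)$ is obtained symmetrically, starting from an element $w-S\in J-S$, using oH-integrability of $F$ to find $y\in\Sigma_\Pi(F)$ close to $w$, and subtracting $\sigma(g,\Pi)$. Uniqueness from Proposition \ref{unicoint} then confirms $J_E(G)=J_E(F)-\int_E g\,d\mu$ on every measurable $E$, and the decomposition $F=\{g\}+G$ is complete.

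The only non-routine step is the combination in step three; the main obstacle I anticipate is keeping track of the two $(o)$-sequences simultaneously and verifying that the ``Minkowski arithmetic'' $\mathcal{U}(A,b)-c\subseteq\mathcal{U}(A-c,b)$ used implicitly in the triangle-inequality estimate is valid in the lattice sense. Once this is checked, both inclusions in Definition \ref{oHmultivoco} follow cleanly.
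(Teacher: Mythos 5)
Your proposal is correct and follows essentially the same route as the paper: the same decomposition $g(t)=\sup F(t)$, $G(t)=F(t)-g(t)$, the same support-function arguments, and the same candidate integral $J-\sup J$ verified by combining the $(o)$-sequences for $F$ and $g$ (the latter supplied by Theorem \ref{quasiselezione}). Your version merely spells out the triangle-inequality step that the paper leaves as ``easy to see.''
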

\begin{proof}
 Let $g(t)=\sup{F(t)}$ for all $t$, and define $G(t)=F(t)-g(t)$ by translation. Then clearly $\sup{G(t)}=0$. Moreover, from (\ref{deco2}.1) it follows $0\in G(t)$, by a translation argument. Now, for every fixed $t$ and any element $x^*\in X^*$, clearly $s(x^*,G(t))\geq 0$.
\\
In case $x^*$ is positive, we also have $0=x^*(0)\geq x^*(u)$ for all $u\in G(t)$, so $0\geq s(x^*,G(t))$. Combining this result with the previous one, we get $s(x^*,G(t))=0$ for all positive $x^*$.
\\
It only remains to show that $G$ is {\rm oH}-integrable. Indeed, it will be proved that its integral is $J-\sup\{J\}$, where $J={\rm(oH)}\int_T F d\mu$.
By integrability of $F$ and $g$, there exist an $(o)$-sequence $(b_n)_n$ in $X$ and a corresponding sequence $(\gamma_n)_n$ of gauges, such that, as soon as $\Pi$ is any $\gamma_n$-fine partition, one has
$$\Sigma_{\Pi}(F)\subset {\cal U}(J,b_n),\ \ J\subset {\cal U}(\Sigma_{\Pi}(F),b_n), \ \  |\sigma(g,\Pi)-\sup{J}|\leq b_n.$$
From this, it is easy to see that
$$\Sigma_{\Pi}(G)\subset {\cal U}(J-\sup{J},2b_n),\ \ J-\sup{J}\subset {\cal U}(\Sigma_{\Pi}(G),2b_n).$$
This suffices to prove integrability of $G$. 
\end{proof}

Observe here that condition (\ref{deco2}.1) is fulfilled, for example, if $F(t)$ is upwards directed for every $t$: in this case $\sup{F(t)}\in F(t)$ thanks to \cite[Proposition 354 E]{fvol3}.\\

The previous results can be used to introduce an example, in which the set $\Phi^o(F,T)$ is in $cwk(X)$, but the mapping $F$ is not (oH)-integrable.
\begin{example}\label{nonH}\rm
Let $T=[0,1]$ and $A\subset [0,1]$ be any non-measurable set, so that the mapping $1_A$ is not Lebesgue integrable. Clearly, this also means that $1_A$ is not H-integrable (in any sense, both in order and topology, since here the space $X$ is $\erre$). Next, define the following multivalued map $F:[0,1]\to ck(\erre)$ as follows:
$$F(t)=[0,1_A(t)],$$
for every $t\in [0,1]$ (here, of course, $[0,0]$ means $\{0\}$).
\\
Clearly, $F$ is an order-bounded map, and the element $0$ belongs to all the sums $\Sigma_{\Pi}(F)$, for whatsoever partition $P$ of $[0,1]$. Hence $\Phi^o(F,T)=\Phi(F,T)$ is non-empty and compact, thanks also to \cite[Proposition 3.8]{bms}.
\\
But $F$ is not (oH)-integrable, otherwise the mapping $t\mapsto \max F(t)$ would be (oH)-integrable by Theorem \ref{quasiselezione}, which is impossible.
\\
This example shows that the quantity $\Phi^o(F,T)$ might
replace the integral in some cases of non-integrability, when the previous Theorem \ref{confrontoPhistar} is not valid.
\end{example}

For single-valued mappings,  \cite[Theorem 15]{cs2014} shows that,
at least in $L$-spaces, {\rm oH}-integrability implies Bochner integrability (with the same integral). 
Observe also that this result is somewhat similar to  \cite[Theorem 5.12]{gould}. 
A consequence of  \cite[Theorem 15]{cs2014}
is the following result, which can be viewed as a particular version of Theorem \ref{deco2}.
\begin{corollary}\label{colsup}
Let  $L$ be any $L$-space, and $F:T\to cwk(L)$ be an {\rm oH}-
integrable mapping. Assume also that $\sup{F(t)}\in F(t)$ for all $t$. Then $F$ is the sum of a Bochner integrable single-valued mapping $f:T\to L$ and an {\rm oH}-integrable mapping $G:T\to cwk(L)$ such that $s(x^*,G(t))\geq 0$ for all elements $x^*\in L^*$ and $s(x^*,G(t))=0$ for all positive elements $x^*$.
\end{corollary}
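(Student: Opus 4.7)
The plan is to apply Theorem \ref{deco2} directly, and then upgrade the regularity of the single-valued summand by invoking the cited result from \cite{cs2014}. Since the hypothesis $\sup F(t)\in F(t)$ is precisely condition (\ref{deco2}.1), Theorem \ref{deco2} immediately yields a decomposition $F=g+G$ in which $g(t)=\sup F(t)$ is oH-integrable and $G:T\to cwk(L)$ is oH-integrable with $s(x^*,G(t))\ge 0$ for every $x^*\in L^*$ and $s(x^*,G(t))=0$ for every positive $x^*$. Thus the \emph{structural} part of the statement (namely the shape of the decomposition and the support-function conditions on $G$) is already taken care of for free.

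What remains is to strengthen the conclusion ``$g$ is oH-integrable'' to ``$g$ is Bochner integrable''. Here I would quote \cite[Theorem 15]{cs2014}, which says that in an $L$-space every oH-integrable single-valued mapping is automatically Bochner integrable (with the same integral). Since $L$ is assumed to be an $L$-space and $g:T\to L$ was produced in the previous step as an oH-integrable selection, this theorem applies verbatim and gives Bochner integrability of $g$. Combining the two observations finishes the proof.

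The only thing one should double-check is that the $g$ appearing in Theorem \ref{deco2} can indeed be identified with the pointwise supremum $\sup F(t)$: this is exactly how $g$ is defined in the proof of Theorem \ref{deco2} (``$g(t)=\sup F(t)$ for all $t$''), and its oH-integrability is established there via Theorem \ref{quasiselezione}. So no new verification is required, and the main (in fact only non-routine) step is the invocation of \cite[Theorem 15]{cs2014}, which is where the $L$-space hypothesis is essential — this is also the single point at which the proof differs from that of Theorem \ref{deco2}. Since no further obstacle arises, the argument reduces to citing Theorem \ref{deco2} together with \cite[Theorem 15]{cs2014}.
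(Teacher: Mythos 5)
Your proof is correct and matches the paper's argument exactly: the paper likewise obtains the decomposition from Theorem \ref{deco2} and then upgrades the single-valued summand to Bochner integrability via \cite[Theorem 15]{cs2014}. No gaps.
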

\begin{proof}
 It is enough to combine Theorem \ref{deco2} with \cite[Theorem 15]{cs2014}. 
\end{proof}

Some conditions will be given now, ensuring oH-integrability of a multivalued function. The first is a Lemma of the Cauchy-type.
\begin{lemma}\label{ocauchy}
Let $F:T\to cwk(X)$ be any set-valued mapping. 
Then $F$ is {\rm oH}-integrable if and only if 
there exists an $(o)$-sequence $(b_n)_n$ in $X$ and a corresponding sequence of gauges $(\gamma_n)_n$  such that for every n, and every pair $\Pi$, $\Pi'$ of $\gamma_n$-fine partitions of $T$ one has
$\Sigma_{\Pi}(F)\subset \mathcal{U}(\Sigma_{\Pi'}(F), b_n).$
\end{lemma}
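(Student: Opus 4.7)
Plan. The necessity is immediate from the definition of oH-integrability. If $J_T$ is the integral of $F$ with $(o)$-sequence $(b_n)$ and gauges $(\gamma_n)$, then for any pair $\Pi,\Pi'$ of $\gamma_n$-fine partitions and any $z\in\Sigma_\Pi F$, the two inclusions $\Sigma_\Pi F\subset\mathcal{U}(J_T,b_n)$ and $J_T\subset\mathcal{U}(\Sigma_{\Pi'}F,b_n)$ chain together by the triangle inequality for $|\cdot|$ to give $\Sigma_\Pi F\subset\mathcal{U}(\Sigma_{\Pi'}F,2b_n)$; the sequence $(2b_n)_n$ is still an $(o)$-sequence, so this yields the announced Cauchy condition.

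For sufficiency my plan is to lift the problem to the Banach lattice $C(\Omega)$ via the R{\aa}dstr\"om-type embedding $i:cwk(X)\to C(\Omega)$ of Theorem \ref{5.6}. Order-continuity of the norm (part of $(H_0)$) forces $\|b_n\|\to 0$, and since $|z-y|\le b_n$ implies $\|z-y\|\le\|b_n\|$ in any Banach lattice, the symmetric form of the Cauchy hypothesis yields $d_H(\Sigma_\Pi F,\Sigma_{\Pi'}F)\le\|b_n\|$ for any $\gamma_n$-fine $\Pi,\Pi'$. By the isometry and positive linearity of $i$ this becomes the norm-Cauchy bound $\|\sigma(i(F),\Pi)-\sigma(i(F),\Pi')\|_{\infty}\le\|b_n\|$, so completeness of $C(\Omega)$ makes $i(F)$ H-integrable in the Banach sense with some integral $I^{*}$. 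Along any sequence $(\Pi_m)$ of $\gamma_m$-fine partitions (with the gauges taken decreasing) one has $\sigma(i(F),\Pi_m)=i(\Sigma_{\Pi_m}F)\to I^{*}$ in norm; since $i(cwk(X))$ is norm-closed by Theorem \ref{5.6}.2, there is a unique $J\in cwk(X)$ with $i(J)=I^{*}$, and Theorem \ref{5.6}.1 then upgrades the convergence to $d_H(\Sigma_{\Pi_m}F,J)\to 0$.

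The delicate final step is to verify that this $J$ is actually the oH-integral of $F$, i.e. that $\Sigma_\Pi F\subset\mathcal{U}(J,b_n)$ and $J\subset\mathcal{U}(\Sigma_\Pi F,b_n)$ for every $n$ and every $\gamma_n$-fine $\Pi$. For the first inclusion, given $z\in\Sigma_\Pi F$ I would use the order Cauchy hypothesis to produce, for each $m\ge n$, a point $y_m\in\Sigma_{\Pi_m}F$ with $|z-y_m|\le b_n$; the $y_m$ lie in the order interval $z+[-b_n,b_n]$, which is weakly compact because $X$ has order-continuous norm, so a subsequence $y_{m_k}$ converges weakly to some $y$. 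The Hausdorff convergence $d_H(\Sigma_{\Pi_m}F,J)\to 0$ lets me pick $\tilde y_m\in J$ with $\|y_m-\tilde y_m\|\to 0$, whence $\tilde y_{m_k}\rightharpoonup y$ as well; weak closedness of the convex, norm-closed set $J$ forces $y\in J$, and weak closedness of the order interval $z+[-b_n,b_n]$ forces $|z-y|\le b_n$. The reverse inclusion is proved symmetrically, using weak compactness of $\Sigma_\Pi F\in cwk(X)$ in place of that of $z+[-b_n,b_n]$. I expect this transfer from the norm-type data supplied by the embedding back to the order-type data required by oH-integrability to be the main obstacle, since it rests precisely on the interplay between order-continuity of the norm, weak compactness of order intervals in $X$, and the weak closedness that lets the pointwise inequalities $|\cdot|\le b_n$ survive passage to weak limits.
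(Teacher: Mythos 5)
Your proof is correct and follows essentially the same route as the paper: translate the order--Cauchy condition into the Hausdorff--metric Cauchy condition via $\|b_n\|\to 0$, obtain $J$ as the norm H-integral through the embedding (Theorem \ref{nuovoconfronto}), and then upgrade back to the order bounds $\Sigma_{\Pi}F\subset \mathcal{U}(J,b_n)$ and $J\subset\mathcal{U}(\Sigma_{\Pi}F,b_n)$. The only cosmetic difference is in that last step, where the paper writes $\Sigma_{\Pi}F\subset J+[-b_n,b_n]+\varepsilon B_X$ and lets $\varepsilon\to 0$ using closedness of $J+[-b_n,b_n]$ (weakly compact plus weakly closed), whereas you extract weak limits directly via weak compactness of order intervals under the order-continuous norm; the two arguments rest on the same facts.
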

\begin{proof}
The necessary condition is obvious. For the converse implication observe that 
$\mathcal{U}(C, b)=C+[-b,b]$
for all sets $C\subset X$ and all $b\in X^{++}$. 
Since $ [-b,b]\subset \|b\|B_X$ for all $b\in X^{++}$, and $(b_n)_n$ is an $(o)$-sequence, the condition above implies that 
$d_H(\Sigma_{\Pi}(F),\Sigma_{\Pi'}(F))\leq \|b_n\|$
(and $\lim_n\|b_n\|=0$). This is precisely the Cauchy condition for the (H)-integral of $F$, and this implies that $J:=(H)\int_T F d\mu$ exists, thanks to Theorem \ref{nuovoconfronto}. Now, in order to prove that $J$ is also the (oH)-integral of $F$, 
fix $n$ and let $\Pi$ be any $\gamma_n$-fine partition. Moreover, for each $\varepsilon>0$ there exists a gauge $\gamma'$ such that
$d_H(\Sigma_{\Pi'}(F),J)\leq \varepsilon$
for any $\gamma'$-fine partition $\Pi'$. In particular, if  $\Pi'$ is $\gamma'\wedge \gamma_n$-fine, one has
$$\Sigma_{\Pi}(F)\subset \Sigma_{\Pi'}(F)+[-b_n,b_n]\subset J+[-b_n,b_n]+\varepsilon B_X.$$
Now, since $[-b_n,b_n]$ is closed (see \cite[Lemma 354B(c)]{fvol3}) and $J$ is weakly compact, $J+[-b_n,b_n]$ is closed too. So, by letting $\varepsilon$ tend to $0$, it follows easily
$$\Sigma_{\Pi}(F)\subset J+[-b_n,b_n]$$
for any $\gamma_n$-fine partition $\Pi$.
A perfectly symmetric reasoning proves also the reverse inclusion:
$$J\subset \Sigma_{\Pi}(F)+[-b_n,b_n]\,\,\,
\mbox{i.e.} \,\,\, J={\rm (oH)} \int F d\mu.$$
\end{proof}

The next result is inspired at \cite[Lemma 5.35]{KS}, and will be applied later.

\begin{proposition}\label{maggioriminori}    
Let $F:T\to cwk(X)$ be any set-valued mapping. Assume that there exists an $(o)$-sequence $(b_n)_n$ in $X$ such that, for every $n$ a couple of {\rm oH}-integrable mappings $G_1,G_2$ exist, from $T$ to $cwk(X)$, such that  $G_1(t)\subset F(t)\subset G_2(t)$ for every $t\in T$, and {\rm (oH)}-$\int_T G_2 d\mu \subset \mathcal{U}(\int_T G_1  d\mu,b_n)$.
Then $F$ is {\rm oH}-integrable.
\end{proposition}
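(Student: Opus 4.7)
The plan is to verify the Cauchy-type criterion of Lemma \ref{ocauchy} for $F$. For each $m \in \enne$ the hypothesis supplies oH-integrable mappings $G_1^{(m)}, G_2^{(m)}: T \to cwk(X)$ with $G_1^{(m)}(t) \subset F(t) \subset G_2^{(m)}(t)$ for every $t$, and with integrals $J_1^{(m)}, J_2^{(m)}$ satisfying $J_2^{(m)} \subset \mathcal{U}(J_1^{(m)}, b_m)$. By Definition \ref{oHmultivoco}, oH-integrability of $G_i^{(m)}$ produces an $(o)$-sequence $(b^{(i,m)}_k)_k$ in $X$ and gauges $(\gamma^{(i,m)}_k)_k$, for $i=1,2$.

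First I would carry out a routine chain of inclusions. Fix indices $k_m$ (to be selected below) and set $\gamma_m^* := \gamma^{(1,m)}_{k_m} \wedge \gamma^{(2,m)}_{k_m}$. For any pair $\Pi,\Pi'$ of $\gamma_m^*$-fine partitions of $T$, the pointwise set-monotonicity gives $\Sigma_\Pi(F) \subset \Sigma_\Pi(G_2^{(m)})$ and $\Sigma_{\Pi'}(G_1^{(m)}) \subset \Sigma_{\Pi'}(F)$. Combining these with the oH-integrability of $G_1^{(m)},G_2^{(m)}$, the sandwich $J_2^{(m)} \subset \mathcal{U}(J_1^{(m)}, b_m)$, and the elementary identity $\mathcal{U}(\mathcal{U}(A,b),c) = \mathcal{U}(A, b+c)$, one obtains
\[ \Sigma_\Pi(F) \subset \mathcal{U}\!\left(\Sigma_{\Pi'}(F),\ b_m + b^{(1,m)}_{k_m} + b^{(2,m)}_{k_m}\right). \]

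The hard part will be to choose the $k_m$ so that the correction $a_m := b^{(1,m)}_{k_m} + b^{(2,m)}_{k_m}$ is dominated by a single $(o)$-sequence of $X$. This is where hypothesis $(H_0)$ is essential: since each row $(b^{(1,m)}_k + b^{(2,m)}_k)_k$ is an $(o)$-sequence in $k$, weak $\sigma$-distributivity yields
\[ \bigwedge_{\phi \in \enne^{\enne}} \bigvee_m \bigl(b^{(1,m)}_{\phi(m)} + b^{(2,m)}_{\phi(m)}\bigr) = 0. \]
A standard diagonalization then does the job: pick $\phi_n$ realizing the infimum up to a prescribed tolerance $u_n$ (with $(u_n)_n$ a preassigned $(o)$-sequence), replace each $\phi_n$ by the pointwise maximum $\max_{n'\leq n} \phi_{n'}$ (which only decreases $a_m$, hence preserves the bound), and set $\phi(m) := \phi_m(m)$. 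Putting $k_m := \phi(m)$, the sequence $v_m := \bigvee_{m' \geq m} a_{m'}$ is then a decreasing sequence with $\bigwedge_m v_m = 0$, so it is an $(o)$-sequence dominating $a_m$; the Dedekind completeness needed for these suprema is ensured by the order-continuity of the norm.

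With these choices, for every $m$ and every pair of $\gamma_m^*$-fine partitions $\Pi,\Pi'$ of $T$ one has $\Sigma_\Pi(F) \subset \mathcal{U}(\Sigma_{\Pi'}(F), c_m)$ with $c_m := b_m + v_m$ an $(o)$-sequence in $X$; Lemma \ref{ocauchy} then delivers the oH-integrability of $F$.
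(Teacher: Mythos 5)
Your core argument coincides with the paper's: the chain of inclusions $\Sigma_{\Pi}(F)\subset\Sigma_{\Pi}(G_2)\subset\mathcal{U}(J_2,\beta)\subset\mathcal{U}(J_1,b+\beta)\subset\mathcal{U}(\Sigma_{\Pi'}(G_1),b+2\beta)\subset\mathcal{U}(\Sigma_{\Pi'}(F),b+2\beta)$, followed by an appeal to the Cauchy criterion of Lemma \ref{ocauchy}. In one respect you are more careful than the paper: since $G_1,G_2$ depend on $n$, each pair carries its own regulating $(o)$-sequence, and these must be collapsed into a single $(o)$-sequence before Lemma \ref{ocauchy} can be invoked; the paper's proof passes over this point by writing a single $(\beta_n)_n$ ``regulating oH-integrability of $G_1$ and $G_2$'' as if the pair were fixed, so your insistence on a diagonalization is a genuine improvement.

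That said, the step where you ``pick $\phi_n$ realizing the infimum up to a prescribed tolerance $u_n$'' does not work as stated: in a vector lattice, $\bigwedge_{\phi}x_{\phi}=0$ does not produce, for a preassigned positive $u$, an index $\phi$ with $x_{\phi}\leq u$ (in $\erre^2$ the pair $(1,0),(0,1)$ has infimum $0$, yet neither element lies below $(\tfrac12,\tfrac12)$). The repair stays entirely inside your framework: the family $\{\bigvee_m a_{m,\phi(m)}:\phi\in\enne^{\enne}\}$ is downward directed, precisely because replacing $\phi,\psi$ by their pointwise maximum decreases every entry (a device you already use), and a downward directed family with infimum $0$ in a space with order-continuous norm contains elements of arbitrarily small norm; choosing $\phi_n$ with $\|\bigvee_m a_{m,\phi_n(m)}\|\leq 2^{-n}$ and noting that a decreasing sequence of positive elements whose norms tend to $0$ has infimum $0$ completes your diagonalization. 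Even more directly, one can bypass weak $\sigma$-distributivity altogether: order continuity of the norm lets you pick $k_m$ with $\|b^{(1,m)}_{k_m}+b^{(2,m)}_{k_m}\|\leq 2^{-m}$, and the corrections $a_m$ are then dominated by the tails of the norm-convergent series $\sum_m\bigl(b^{(1,m)}_{k_m}+b^{(2,m)}_{k_m}\bigr)$, which form an $(o)$-sequence. With that substitution your proof is complete and, on the uniformization issue, more rigorous than the published one.
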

\begin{proof}
Let $(b_n)_n$ be an $(o)$-sequence as in the hypothesis. Let also $(\beta_n)_n$ and $(\gamma_n)_n$ be an $(o)$-sequence and its corresponding sequence of gauges regulating oH-integrability of $G_1$ and $G_2$ (without loss of generality they can be taken the same for both multifunctions). Now, fix $n\in \enne$ and take two $\gamma_n$-fine partitions $\Pi$ and $\Pi'$ of $T$. Then, denoting by $J_1$ and $J_2$ the integrals of $G_1$ and $G_2$ respectively, one has
$$\Sigma_{\Pi}(F)\subset \Sigma_{\Pi}(G_2)\subset J_2+[-\beta_n,\beta_n]\subset J_1+[-b_n-\beta_n,b_n+\beta_n]$$
and
\begin{eqnarray*}
J_1+[-b_n-\beta_n,b_n+\beta_n] &\subset& \Sigma_{\Pi'}(G_1)+[-b_n-2\beta_n,b_n+2\beta_n]\subset \\
&\subset& \Sigma_{\Pi'}(F)+[-b_n-2\beta_n,b_n+2\beta_n].
\end{eqnarray*}
So, comparing the last two formulas, and taking $c_n:=b_n+2\beta_n$, one gets 
$$\Sigma_{\Pi}(F)\subset \mathcal{U}(\Sigma_{\Pi'}(F),c_n)$$
for all $\gamma_n$-fine partitions $\Pi, \Pi'$. Integrability now follows from Lemma \ref{ocauchy}. 
\end{proof}


\section{The $[0,1]$ interval case}\label{01multi}
In this section, one considers functions defined on the unit interval $[0,1]$, endowed with the Lebesgue measure $\lambda$, and taking values in an arbitrary Banach lattice with order-continuous norm.
In order to define Henstock integrability (and integral), the only partitions allowed consist of (pairwise non-overlapping) subintervals of $[0,1]$. 
This produces the well-known distinction between Henstock and McShane integrability: indeed, in the latter type, the partitions allowed still consist of subintervals, but the tags need not belong to the corresponding subintervals. However, if $f$ is McShane-integrable in this sense, then the integral can be equivalently defined by allowing also partitions consisting of general measurable subsets: see \cite[Proposition  1F]{f1995}.
For this reason, McShane integrability in this sense will be still mentioned as oH-integrability.\\

 A first result is that, in this case, monotonicity implies oH-integrability:
 the following result is inspired at \cite[Example 5.36]{KS}.

\begin{theorem}\label{monotone} 
Assume that $F:[0,1]\to cwk(X)$ is an increasing mapping, with respect to the inclusion. Then $F$ is {\rm oH}-integrable.
\end{theorem}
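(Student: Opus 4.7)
The plan is to apply the sandwich criterion of Proposition \ref{maggioriminori}, bracketing $F$ between two simple (step) multifunctions whose integrals differ by an order-small amount. This is the multivalued analogue of the classical Riemann argument that a monotone scalar function on $[0,1]$ is integrable.

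First I would observe that since $F$ is increasing with respect to inclusion, $F(t)\subset F(1)$ for every $t\in[0,1]$, and by the standing order-boundedness assumption $F(1)$ is contained in some order interval $[u,v]$. Hence there exists $M\in X^+$ such that $|x|\leq M$ for every $x\in F(t)$ and every $t\in[0,1]$ (for instance $M:=v^+ + u^-$).

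Next, for each $n\in\enne$ I would subdivide $[0,1]$ into $n$ equal subintervals $I_{k,n}:=[(k-1)/n,k/n)$ and define the simple multifunctions
\[
G_1^n:=\sum_{k=1}^n F((k-1)/n)\,1_{I_{k,n}},\qquad G_2^n:=\sum_{k=1}^n F(k/n)\,1_{I_{k,n}}.
\]
By monotonicity of $F$, $G_1^n(t)\subset F(t)\subset G_2^n(t)$ pointwise. By Remark \ref{semplici} these simple multifunctions are oH-integrable with
\[
J_i^n:={\rm (oH)}\int_{[0,1]} G_i^n\,d\lambda= \frac{1}{n}\sum_{k=1}^n F((k-i+1)/n),\qquad i=1,2.
\]

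The key step is the estimate $J_2^n\subset \mathcal{U}(J_1^n,2M/n)$. Fix any $\alpha\in F(0)$. Given $z=\tfrac{1}{n}\sum_{k=1}^n a_k\in J_2^n$ with $a_k\in F(k/n)$, set
\[
w:=\frac{1}{n}\Bigl(\alpha+\sum_{k=1}^{n-1} a_k\Bigr).
\]
Because $a_k\in F(k/n)$ (and in particular the $a_k$ with $k\leq n-1$ serve as elements of $F(k/n)$, which is the value assigned by $G_1^n$ on $I_{k+1,n}$), $w$ belongs to the Minkowski sum defining $J_1^n$. Then $z-w=\tfrac{1}{n}(a_n-\alpha)$ with $|a_n-\alpha|\leq 2M$, giving $|z-w|\leq 2M/n$.

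Finally I would note that $(b_n)_n:=(2M/n)_n$ is an $(o)$-sequence in $X^+$: it is decreasing, and by the Archimedean property of the Banach lattice $X$ any lower bound $v$ of this sequence satisfies $nv\leq 2M$ for all $n$, forcing $v\leq 0$, so $\wedge_n (2M/n)=0$. Proposition \ref{maggioriminori} then yields oH-integrability of $F$. The main delicate point I expect is simply verifying that one can always select the element $w\in J_1^n$ from the correct sets (the indexing shift $k\mapsto k-1$), which is handled by the choice of any fixed $\alpha\in F(0)$ as the initial summand.
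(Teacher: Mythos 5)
Your proposal is correct and follows essentially the same route as the paper's own proof: bracketing $F$ between the two step multifunctions built from the uniform partition, using the index-shift with a fixed element of $F(0)$ to get the $2M/n$ estimate, and invoking Proposition \ref{maggioriminori}. The only cosmetic difference is your use of a single family of half-open intervals for both $G_1^n$ and $G_2^n$ (leaving $t=1$ to be handled separately, as the paper does explicitly), which does not affect the argument.
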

\begin{proof}
 Since $F(t)$ is order-bounded for all $t$,  one can set
$K:=\sup\{|x|: x\in F(1)\}$ and obtain that $F(t)\subset [-K,K]$ for all $t$. 
Now, for each integer $n$ let $t_i:=i n^{-1}$, $i=0,...,n$, and define two multivalued mappings, $G_1$ and $G_2$, in the following way:
\begin{eqnarray*}
G_1(t) &=& \left\{\begin{array}{ll}
                        F(t_i),& t\in [t_i,t_{i+1}[, i=0,1,...,n-1\\
                        F(t_{n-1}),&  if\ \ t=1;
\end{array}
\right.
\\
G_2(t) &=& \left\{ \begin{array}{ll}
                        F(t_{i+1}), \ \ t\in ]t_i,t_{i+1}], i=0,1,...,n-1\\
                        F(0), \ if \ \ t=0.
\end{array}\right.
\end{eqnarray*}
Clearly $G_1$ and $G_2$ are oH-integrable since they are simple
(see Remark \ref{semplici}), and it is obvious that $G_1(t)\subset F(t)\subset G_2(t)$ for all $t$.
Now, it will be  proven that 
$\int G_2 d\lambda \subset \int G_1 d\lambda+[-2K n^{-1}, 2K n^{-1}]:$
thanks to Proposition \ref{maggioriminori} this will yield integrability of $F$.
Of course, 
$$\int G_2 d\lambda=n^{-1} \sum_{i=1}^n F(t_i), \ \ \ 
\int G_1 d\lambda = n^{-1} \sum_{i=1}^n F(t_{i-1}).$$
Now, take any element $z\in \int G_2 d\lambda$: then 
$z=n^{-1}(x_1+x_2+...+x_n),$
for suitable elements $x_i\in F(t_i), i=1,...,n$. Let us choose arbitrarily $x_0\in F(0)$ and define
$w:=n^{-1}(x_0+x_1+x_2+...+x_{n-1}).$
Of course, $w\in \int G_1 d\lambda$ and
$|z-w|=n^{-1}(|x_n-x_0|)\leq 2K n^{-1}.$
In conclusion, for every element $z\in \int G_2 d\lambda$ there exists an element $w\in \int G_1 d\lambda$ such that $|z-w|\leq 2K n^{-1}$,
i.e.
$$\int G_2 d\lambda \subset \int G_1 d\lambda +[-2Kn^{-1}, 2K n^{-1}]$$
as announced. The proof is now complete. 
\end{proof}

Observe here that the last Theorem is not a consequence of \cite[Theorem 19]{cs2014} and Theorem \ref{nuovoconfronto}: 
in fact, combining these two results one only obtains norm-integrability of $F$.
\section*{Conclusions}

In this paper the notions of Henstock (Mc Shane) integrability for
 functions and multifunctions defined in a metric compact regular space
 and taking values in a Banach lattice with an order-continuous norm are
 investigated. Both the norm-type and the order-type integrals have been
 examined. 
 Thanks to the structure of near vector space of $cwk(X)$, it is proved that the
 Henstock norm integrability of a multifunction $F$ is equivalent to the
 Henstock integrability (both in the norm and in the order sense) of its
 embedded function $i(F)$.
 Selections and decomposition results are also obtained.\\
Also the case of $ck(X)$-valued mapping can be discussed by using an analogous construction, with similar conclusions.

\section*{Acknowledgements}
The authors have been supported by University of Perugia -- Department of Mathematics and Computer Sciences -- Grant Nr 2010.011.0403, 
Prin: Metodi logici per il trattamento dell'informazione,   Prin: Descartes and by the Grant prot. U2014/000237 of GNAMPA - INDAM (Italy).

\end{document}